\newtheorem{thm}{Theorem}[section]
\newtheorem{prop}[thm]{Proposition}
\newtheorem{lem}[thm]{Lemma}
\newtheorem{cor}[thm]{Corollary}
\newtheoremstyle{named}{}{}{\itshape}{}{\bfseries}{.}{.5em}{#3}
\theoremstyle{named}
\newtheorem*{namedtheorem}{Theorem}
\theoremstyle{remark}
\begin{document}

\title[On the automorphisms of $X_{ns}(p)$]{On the automorphisms of the non-split Cartan modular curves of prime level}
\author{Valerio Dose}
\email{dose@mat.uniroma2.it}
\address{Dipartimento di Matematica\\Universit\`a di Roma ``Tor Vergata''\\
Via della Ricerca Scientifica 1\\00133 Roma\\ITALY}

\subjclass[2010]{14G35,11G05}
\keywords{modular curves, ellipic curves, galois representations}

\begin{abstract}
We study the automorphisms of the non-split Cartan modular curves $X_{ns}(p)$ of prime level $p$. We prove that if $p\geq 37$ all the automorphisms preserve the cusps. Furthermore, if $p\equiv1\textnormal{ mod }12$ and $p\neq 13$, the automorphism group is generated by the modular involution given by the normalizer of a non-split Cartan subgroup of $\text{GL}_2(\mathbb F_p)$. We also prove that for every $p\geq 37$ such that $X_{ns}(p)$ has a CM rational point, the existence of an exceptional rational automorphism would give rise to an exceptional rational point on the modular curve $X_{ns}^+(p)$ associated to the normalizer of a non-split Cartan subgroup of $\text{GL}_2(\mathbb F_p)$.
\end{abstract}

\maketitle

\section*{Introduction}

Modular curves can be constructed as compactifications of certain quotients of the upper half complex plane. When the genus exceeds 1, it is a classical question to ask if all of their automorphisms are induced by an automorphism of the upper half complex plane. Here we make some progress towards answering this question for non-split Cartan modular curves.

Let $N$ be a positive integer and let $H$ be a subgroup of $\text{GL}_2(\mathbb Z/N\mathbb Z)$ with the property that the determinant homomorphism from $H$ to $\mathbb Z/N\mathbb Z^*$ is surjective. We write $X_H$ for the modular curve over $\mathbb Q$ associated to the subgroup $H$. Let $\Gamma_H$ be the subgroup of $\text{SL}_2(\mathbb Z)$ made up of the elements which reduce modulo $N$ to an element in $H$. The complex points of $X_H$ can be identified with the orbit space $\mathcal H\cup \mathbb Q\cup \{\infty\}/\Gamma_H$, where $\mathcal H$ is the upper half complex plane on which any element of $\text{SL}_2(\mathbb R)$ acts as a M\"obius transformation $\displaystyle\begin{pmatrix}a&b\\c&d\end{pmatrix}\tau=\frac{a\tau+b}{c\tau+d}$, for every $\tau\in \mathcal H$. Let $\text{Norm}(\Gamma_H)$ be the normalizer of $\Gamma_H$ in $\text{SL}_2(\mathbb R)$. An element of $\text{Norm}(\Gamma_H)$ induces naturally an automorphism of the curve $X_H$, hence the group $\displaystyle B(X_H)\mathop{=}^\text{def}\text{Norm}(\Gamma_H)/\Gamma_H$ can be considered as a subgroup of the automorphism group $\text{Aut}(X_H)$ of $X_H$. We call an element of $B(X_H)$ a \textit{modular automorphism} of $X_H$. A non-modular automorphism is also called \textit{exceptional}.

If $H$ is a Borel subgroup of $\text{GL}_2(\mathbb Z/N\mathbb Z)$, the curve $X_H$ is the classical modular curve $X_0(N)$ and $\Gamma_H$ is the classical congruence subgroup $\Gamma_0(N)$. For every positive integer $N$, the automorphism group of $X_0(N)$ has been determined in \cite{OggDe}\cite{KM}\cite{Elkies}\cite{Harrison108}, and if the genus of $X_0(N)$ is at least 2 and $N\neq 37,63,108$, we have $\text{Aut}(X_0(N))=B(X_0(N))$. If we take $N=p$ a prime number, the group $B(X_0(p))$ is generated by the Atkin-Lehner involution $w_p$. The automorphism group of $\displaystyle X_0^*(p)\mathop{=}^\text{def}X_0(p)/\langle w_p\rangle$ has been determined in \cite{BakerHas} and it is trivial when the genus exceeds $2$. When $X_0^*(p)$ has genus $2$, the curve $X_0^*(p)$ is hyperelliptic, so it must admit an involution.

In this paper, we concentrate on the case of $H$ being a non-split Cartan subgroup of $\text{GL}_2(\mathbb F_p)$, and we write $X_{ns}(p)$ for the modular curve associated to $H$. This curve always has a modular involution $w$, because of the fact that a non-split Cartan subgroup of $\text{GL}_2(\mathbb F_p)$ has index 2 in its normalizer. We have $B(X_{ns}(p))=\langle w\rangle$ (see Section \ref{secPresCusp}). Let $\displaystyle X_{ns}^+(p)\mathop{=}^\text{def}X_{ns}(p)/\langle w\rangle$ be the modular curve associated to the normalizer of a non-split Cartan subgroup of $\text{GL}_2(\mathbb F_p)$. The curve $X_{ns}(p)$ has genus smaller than $2$ when $p\leq7$. The same happens for $X_{ns}^+(p)$ when $p\leq 11$. It is not true that $\text{Aut}(X_{ns}(p))=\langle w\rangle$ whenever the genus is at least $2$. For example, the automorphism group of $X_{ns}(11)$, determined in \cite{Cat}, is the Klein four group generated by $w$ and an exceptional involution.

Here, extending techniques of \cite{OggHyp}, \cite{KM} and \cite{BakerHas}, we prove the following

\begin{namedtheorem}[Theorem \ref{hyp}]
If $p\geq 11$ the modular curve $X_{ns}(p)$ is not hyperelliptic. If $p\geq 13$ the modular curve $X_{ns}^+(p)$ is not hyperelliptic.
\end{namedtheorem}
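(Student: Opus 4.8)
The plan is to run Ogg's reduction‑modulo‑$\ell$ argument, exploiting that a modular curve of prime level $p$ has good reduction at every prime $\ell\neq p$. Recall the two standard ingredients. If $X/\mathbb{Q}$ is a curve of genus $\geq 2$ which is hyperelliptic, then the hyperelliptic involution is the unique involution with rational quotient, hence it is defined over $\mathbb{Q}$ and central in $\text{Aut}_{\overline{\mathbb{Q}}}(X)$; and, $X$ having good reduction at $\ell$, this involution extends to the smooth model, so $X_{\mathbb{F}_\ell}$ is again hyperelliptic and the degree‑$2$ map to $\mathbb{P}^1$ forces $\#X(\mathbb{F}_{\ell^2})\leq 2(\ell^2+1)$. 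Since for $p\geq 11$ the genus of $X_{ns}(p)$ is $\geq 2$, and for $p\geq 13$ the genus of $X_{ns}^+(p)$ is $\geq 2$ (as recalled in the introduction), it suffices to contradict these bounds.

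The contradiction comes from supersingular points. Write $C_{ns}\subset\text{GL}_2(\mathbb{F}_p)$ for the non‑split Cartan subgroup and $C_{ns}^+$ for its normalizer. Fix a prime $\ell\neq p$ and let $E_0/\mathbb{F}_{\ell^2}$ be the base change of a supersingular elliptic curve over $\mathbb{F}_\ell$; since such a curve has trace of Frobenius $0$, the Frobenius endomorphism of $E_0$ is the scalar $[-\ell]$, so $\text{Frob}_{\ell^2}$ acts on $E_0[p]$ as $-\ell\in\mathbb{F}_p^\times$, a central element of $\text{GL}_2(\mathbb{F}_p)$, and therefore fixes every level structure. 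If $j_0:=j(E_0)\neq 0,1728$, the covers $X_{ns}(p)\to X(1)$ and $X_{ns}^+(p)\to X(1)$ are unramified over $j_0$, and it follows that the entire fibre over $j_0$ — of size $p(p-1)$, respectively $p(p-1)/2$ — consists of $\mathbb{F}_{\ell^2}$‑rational points. In characteristic $13$ the unique supersingular $j$‑invariant is $j=5$, which is neither $0$ nor $1728$; taking $\ell=13$ (permissible once $p\neq 13$) gives $\#X_{ns}(p)(\mathbb{F}_{169})\geq p(p-1)$ and $\#X_{ns}^+(p)(\mathbb{F}_{169})\geq p(p-1)/2$, so the bound $\leq 2(169+1)=340$ already excludes hyperellipticity for $p\geq 19$ in the first case and for $p\geq 29$ in the second.

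For the remaining small primes I would combine two tools. One is a sharper point count using $\ell=2$ and $\ell=3$, where the unique supersingular $j$‑invariant is $j=0$, with automorphism group of order $24$, resp. $12$: the fibre over it still forces at least $\lceil p(p-1)/48\rceil$, resp. $\lceil p(p-1)/24\rceil$, rational points on $X_{ns}^+(p)$ over $\mathbb{F}_4$, resp. $\mathbb{F}_9$, against $\leq 10$, resp. $\leq 20$, which eliminates a few more primes. The other is the Baker–Hasegawa‑style reduction of $X_{ns}(p)$ to $X_{ns}^+(p)$: if $X_{ns}(p)$ were hyperelliptic, its central, rational hyperelliptic involution $v$ would commute with the modular involution $w$, hence descend to $X_{ns}^+(p)=X_{ns}(p)/\langle w\rangle$, and since $X_{ns}(p)/\langle v\rangle\cong\mathbb{P}^1$ this would make $X_{ns}^+(p)/\langle\bar v\rangle\cong\mathbb{P}^1$, i.e. $X_{ns}^+(p)$ hyperelliptic — impossible once that prime has been treated and $g(X_{ns}^+(p))\geq 2$, which holds for $p\geq 13$. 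This leaves only $p=11$ for $X_{ns}(p)$, handled via the explicit model and automorphism group of $X_{ns}(11)$ from \cite{Cat}, and a few small primes for $X_{ns}^+(p)$ — the extreme case being $p=13$, where the statement is the known fact that $X_{ns}^+(13)$ is a smooth plane quartic; these I would finish by exhibiting a basis of weight‑$2$ cusp forms and checking that the canonical map is an embedding.

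The main obstacle is exactly this last step. The supersingular point count is essentially sharp when $p(p-1)$ is of the order of $\ell^2$, so for the first few primes it yields no contradiction, and one is forced either into a careful analysis of the ramified fibres over $j=0,1728$ and of the rational cusps, or into explicit equations for $X_{ns}^+(p)$. Arranging for the small primes to fall out without a separate ad hoc computation for each is the delicate point; the rest is a routine adaptation of the methods of \cite{OggHyp}, \cite{KM} and \cite{BakerHas} once good reduction away from $p$ and the supersingular‑fibre computation are in hand.
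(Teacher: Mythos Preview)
Your strategy is the same Ogg-style supersingular count the paper uses, and it would go through, but your execution is considerably less efficient than the paper's and leaves you with more residual primes than necessary. The key difference is that the paper works directly at $q=2$ and absorbs the automorphism contribution from $j=0,1728$ via the Deuring--Eichler mass formula
\[
\sum_{E \text{ supersingular}/\overline{\mathbb F_q}} \frac{1}{\#\mathrm{Aut}(E)} \;=\; \frac{q-1}{24},
\]
together with the observation that $-1\in C_{ns}$, giving the clean lower bound $\#X_{ns}(p)(\mathbb F_{q^2})\ge p(p-1)(q-1)/12$. With $q=2$ this already forces $p\le 11$ for $X_{ns}(p)$ and $p\le 16$ for $X_{ns}^+(p)$, so only $p=11$ and $p=13$ survive. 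Your detour through $\ell=13$ to avoid the ramified fibres, followed by cruder bounds at $\ell=2,3$ (you are effectively dividing by the full $\#\mathrm{Aut}(E)$ rather than by $\#\mathrm{Aut}(E)/2$), leaves several more primes on the table and then forces you into the auxiliary reduction ``$X_{ns}(p)$ hyperelliptic $\Rightarrow$ $X_{ns}^+(p)$ hyperelliptic'' and into canonical-embedding checks for each leftover $p$.

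For the two genuinely residual cases the paper also proceeds differently from your plan: rather than writing down equations or verifying that the canonical map is an embedding, it stays within the point-counting framework. Using the isogeny $J_{ns}(p)\sim J_0^{\mathrm{new}}(p^2)$ (and $J_{ns}^+(p)\sim J_0^{*,\mathrm{new}}(p^2)$) one reads off the Frobenius eigenvalues at $2$ from Stein's tables of newforms for $\Gamma_0(121)$ and $\Gamma_0(169)$ and computes $\#X_{ns}(11)(\mathbb F_4)=15>10$ and $\#X_{ns}^+(13)(\mathbb F_4)=11>10$, finishing both cases by the same inequality. This is quicker and more uniform than invoking the explicit model of $X_{ns}(11)$ from \cite{Cat} or the plane-quartic description of $X_{ns}^+(13)$; it also avoids the ad~hoc case analysis you flag as the main obstacle.
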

\begin{namedtheorem}[Theorem \ref{aut}]
If $p\geq 37$, all the automorphisms of $X_{ns}(p)$ preserve the cusps. If $p\equiv 1\textnormal{ mod }12$ and $p\neq 13$, the automorphism group of $X_{ns}(p)$ is generated by the modular involution $w$.
\end{namedtheorem}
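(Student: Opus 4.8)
The hypotheses of the second assertion force $p\geq 37$ (the primes $\equiv 1\bmod 12$ are $13,37,61,\dots$), so they contain those of the first; hence it suffices to prove \textbf{(I)} that for $p\geq 37$ every automorphism of $X_{ns}(p)$ preserves the set of cusps, and \textbf{(II)} that, under the congruence hypothesis, a cusp-preserving automorphism lies in $\langle w\rangle$. Write $X=X_{ns}(p)$, $J=\text{Jac}(X)$, and let $g$ be the genus of $X$. For $p\geq 11$ we have $g\geq 2$ and, by Theorem~\ref{hyp}, $X$ is not hyperelliptic; hence the canonical map embeds $X$, every $u\in\text{Aut}(X)$ acts faithfully on $H^0(X,\Omega^1)\cong S_2(\Gamma_{ns}(p))$, and (Torelli) the homomorphism $u\mapsto u^{*}$ from $\text{Aut}(X)$ to the automorphisms of the principally polarized $J$ is injective.

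For \textbf{(I)} the plan, in the spirit of Ogg and Kenku--Momose for $X_0(N)$, is to characterize the cusps by data attached to $J$ that $u^{*}$ must preserve. First, $J$ is isogenous to a product $\prod_f A_f$ over Galois orbits of newforms $f$ of level dividing $p^{2}$ (by Chen's isogeny $J$ is isogenous to the $p^{2}$-new part of $\text{Jac}(X_0(p^{2}))$); since distinct orbits give non-isogenous factors, $u^{*}$ preserves each isogeny block, hence the Hecke action up to conjugacy and the torsion of $J$, and in particular the finite \emph{cuspidal subgroup} $C$ generated by differences of cusps — a characteristic subgroup cut out by the Eisenstein part, on which the $T_{\ell}$ ($\ell\neq p$) act through the scalars $1+\ell$. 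To deduce that $u$ permutes the cusps themselves, note that for $p\geq 37$ the gonality of $X$ is large enough (it grows like $p^{2}$, while the relevant cuspidal divisors have degree $\sim p/2$) that the only classes of the form $[P]-[Q]$ lying in $C$ are the differences of two cusps, and that linearly equivalent effective divisors of degree $2$ coincide; applying this to $u^{*}([P]-[Q])=[u(P)]-[u(Q)]$ for distinct cusps $P,Q$ forces $u(P)$ and $u(Q)$ to be cusps. (Alternatively one uses good reduction of $X$ over $\mathbb{Z}[1/p]$, where on the special fibre at a small auxiliary prime the cusps are distinguished among the finitely many elliptic, CM and supersingular points, and $u$ extends to the smooth model.)

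For \textbf{(II)} assume $p\equiv 1\bmod 12$ and $p\neq 13$, and let $u\in\text{Aut}(X)$; by \textbf{(I)}, $u$ permutes the cusps. Because $p\equiv 1\bmod 4$ there is no CM newform of level $p^{2}$ (such a level would be divisible by $4p$), so every $A_f$ above has endomorphism algebra a totally real field and the only polarization-compatible automorphisms of $A_f$ are $\pm1$; the excluded value $p=13$ is precisely a small level at which extra twists among the relevant newforms would enlarge this, and $p\equiv1\bmod 3$ likewise removes the remaining obstructions to this endomorphism structure. Since $u^{*}$ fixes each block, it acts there as $\pm1$, so $(u^{*})^{2}=\text{id}$, hence $u^{2}=\text{id}$: every automorphism of $X$ is an involution and $\text{Aut}(X)$ is an elementary abelian $2$-group containing $w$. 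If it were strictly larger, choose an involution $u\notin\{1,w\}$; then $\langle u,w\rangle$ is a Klein four-group, $\bar u$ is a nontrivial involution of $X_{ns}^{+}(p)=X/\langle w\rangle$, and combining the Kani--Rosen relation among the Jacobians of the three intermediate double covers $X\to X/\langle u\rangle$, $X\to X/\langle uw\rangle$, $X_{ns}^{+}(p)\to X/\langle u,w\rangle$ with the Riemann--Hurwitz formula and the explicit counts of cusps and elliptic points of $X$ and $X_{ns}^{+}(p)$ — which is where $p\equiv 1\bmod 3$, $p\equiv 1\bmod 4$ and $p\neq 13$ are needed — yields an impossible relation among genera and numbers of fixed points. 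Hence $\text{Aut}(X)=\langle w\rangle$.

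\emph{Main obstacle.} The delicate point is \textbf{(I)}: passing from ``$u^{*}$ preserves the cuspidal subgroup of $J$'', a statement about the Jacobian, to ``$u$ permutes the cusps of $X$'', which cannot be read off $J$ alone and requires either the uniqueness of cuspidal divisors in their classes (hence a sufficient gonality estimate) or the good-reduction comparison separating the cusps from the other distinguished points. In \textbf{(II)} the technical core is establishing the totally real endomorphism structure of all the relevant $A_f$, thereby excluding hidden automorphisms of $J$, and then carrying out the Hurwitz/Kani--Rosen bookkeeping that closes the argument under the stated congruences.
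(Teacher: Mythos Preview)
Your outline has genuine gaps in both parts, and the paper proceeds quite differently.

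For \textbf{(I)} you assert that $u^{*}$ preserves the cuspidal subgroup because it ``preserves each isogeny block, hence the Hecke action up to conjugacy''. But knowing that $u^{*}$ respects the isotypic decomposition does not give $u^{*}T_{\ell}=T_{\ell}u^{*}$, which is what is needed to preserve the Eisenstein part; this commutation (in the twisted form $u^{\sigma_{\ell}}T_{\ell}=T_{\ell}u$) is exactly what the paper proves \emph{after} first establishing that every automorphism is defined over the specific quadratic field $K(p)$ (Corollary~\ref{Kp}), a nontrivial step you omit. Even granting preservation of the cuspidal subgroup, your passage back to the cusps themselves is not a gonality statement: torsion of $[P]-[Q]$ only yields $nP\sim nQ$ for $n$ the order of the class, and there is no reason $n$ should lie below the gonality. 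The paper instead argues that if $uC$ is not a cusp then the divisor $(u^{\sigma_{\ell}}T_{\ell}-T_{\ell}u)(C-C')$ is nonzero (this uses a delicate analysis ruling out CM elliptic curves over $\mathbb Q(\zeta_p)$) yet principal, and for $\ell=3$ produces a nonconstant map $f\colon X_{ns}(p)\to\mathbb P^{1}$ of degree at most $8$ with $f\circ w\neq f$; since $w$ has roughly $p/2$ fixed points, the elementary bound $\#\mathrm{Fix}(w)\le 2\deg f$ then forces $p\le 31$.

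For \textbf{(II)} your claim that each $A_{f}$ has totally real endomorphism algebra, so that a polarized automorphism acts as $\pm1$ on each factor, is not justified and is in general false: newforms of level $p^{2}$ and trivial character typically carry an inner twist by the Legendre symbol modulo $p$, which makes $\mathrm{End}(A_{f})\otimes\mathbb Q$ strictly larger than the Hecke field even when $p\equiv 1\pmod{12}$ and $p\neq 13$. The paper only shows that the Hecke algebra is its own \emph{commutant} (Proposition~\ref{quadext}), not that it exhausts the endomorphisms, and it never claims $u^{2}=1$. Its argument for \textbf{(II)} avoids the Jacobian entirely: the congruence $p\equiv 1\pmod{12}$ is used solely to guarantee that $X_{ns}(p)$ has no elliptic points, so that a cusp-preserving automorphism lifts to an automorphism of $\mathcal H$ and hence lies in $\mathrm{Norm}(\Gamma_{ns}(p))/\Gamma_{ns}(p)$, which is computed directly to be $\langle w\rangle$ (Corollary~\ref{ellcuspModular}).
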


The interest in non-split Cartan modular curves comes mainly from Serre's uniformity conjecture, which is an important statement regarding Galois representations attached to elliptic curves. It is equivalent to assert that for almost all $p$ and for every maximal subgroup $H$ of $\text{GL}_2(\mathbb F_p)$, the modular curve $X_H$ has no rational points except the cusps and the points associated to elliptic curves with complex multiplication, usually called CM points. The only case left to prove, concerns modular curves associated to the normalizer of a non-split Cartan subgroup of $\text{GL}_2(\mathbb F_p)$.

We prove the following statement relating the existence of exceptional automorphisms of $X_{ns}(p)$ with the existence of non-CM rational points on $X_{ns}^+(p)$.

\begin{namedtheorem}[Theorem \ref{unifAut}]
Let $p\geq 37$ and assume $X_{ns}(p)$ has a CM rational point (i.e. $p$ is inert in at least one of the imaginary quadratic field of class number one). If there exists an exceptional automorphism of $X_{ns}(p)$ defined over $\mathbb Q$, then $X_{ns}^+(p)$ has a non-CM rational point.
\end{namedtheorem}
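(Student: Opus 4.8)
\emph{Proof strategy.}
The plan is to exhibit the desired rational point of $X_{ns}^+(p)$ as $\pi(u(P))$, where $u$ is the given exceptional automorphism, $P$ the given CM rational point, and $\pi\colon X_{ns}(p)\to X_{ns}^+(p)$ the quotient by the modular involution $w$. Write $X=X_{ns}(p)$, $X^+=X_{ns}^+(p)$, and let $j\colon X\to X(1)$ and $\psi\colon X^+\to X(1)$ be the natural maps, so that $j=\psi\circ\pi$. Since $u$ and $P$ are defined over $\mathbb Q$, so is $R:=u(P)\in X(\mathbb Q)$, and hence so is $Q':=\pi(R)\in X^+(\mathbb Q)$.

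First I would pin down the structure of $u$. As $p\geq 37$, Theorem~\ref{aut} says that $u$ permutes the cusps of $X$; and since $B(X)=\langle w\rangle$ and $u$ is exceptional, $u\notin\langle w\rangle$. The cover $j\colon X\to X(1)$ has exactly the two automorphisms $1$ and $w$ (the non-split Cartan subgroup has index $2$ in its normalizer in $\mathrm{GL}_2(\mathbb F_p)$), so $j\circ u\ne j$, for otherwise $u$ would preserve the fibres of $j$ and hence lie in $\langle w\rangle$. (If convenient one may first replace $u$ by a suitable involution of the group $\langle w,\,uwu^{-1}\rangle$ and thereby assume $uw=wu$, so that $u$ descends to a non-trivial automorphism $\bar u$ of $X^+$ over $\mathbb Q$ with $\psi\circ\bar u\ne\psi$ --- the normalizer of the non-split Cartan being self-normalizing for $p\geq 5$; but this is not logically needed, since $Q'=\pi(u(P))$ is a well-defined rational point of $X^+$ in any case.)

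Next I would dispose of the easy alternatives for $Q'$. By Section~\ref{secPresCusp} the cusps of $X$ form a single Galois orbit and are not rational (here $p\geq 5$), so the rational point $P$ is not a cusp; hence neither is $R=u(P)$, and therefore $Q'$ is not a cusp of $X^+$. Its image in $X(1)$ is $\psi(Q')=j(R)=(j\circ u)(P)\in\mathbb Q$; since $j\circ u\ne j$ while $j(P)$ is one of the thirteen rational CM $j$-invariants, one expects $(j\circ u)(P)$ to differ from $j(P)$, and more to the point to fail to be \emph{any} CM $j$-invariant. Granting that, $Q'$ is a rational point of $X_{ns}^+(p)$ which is neither a cusp nor a CM point, which is exactly the conclusion sought.

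The heart of the matter, and the step I expect to be the main obstacle, is therefore the assertion that the exceptional automorphism $u$ does not carry the CM point $P$ to a CM point of $X$. I would argue by contradiction, assuming $R=u(P)$ is CM, and bring in the special arithmetic of CM and cusp points. One line of attack: the pair $(j,\,j\circ u)$ maps $X$ onto a curve $Z\subseteq X(1)\times X(1)$ defined over $\mathbb Q$ which is \emph{not} a modular correspondence (were it one, $u$ would be modular), so $Z$ carries only finitely many points with both coordinates CM $j$-invariants; one would then rule out that $\bigl(j(P),\,(j\circ u)(P)\bigr)$ is one of them using the explicit Galois action on the CM points of $X_{ns}(p)$ and $X_{ns}^+(p)$, which by complex-multiplication theory (Shimura reciprocity) both restricts severely which $j$-invariants can occur and forbids a $\mathbb Q$-rational automorphism from permuting them unless it respects the CM action, hence is modular. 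A more geometric line: analyse the fixed points of a power of $u$ fixing a point in the $u$-orbit of $P$; since $X$ is non-hyperelliptic (Theorem~\ref{hyp}), $\mathrm{Aut}(X)$ acts faithfully on the canonical model, and a Riemann--Hurwitz count of that fixed locus --- using that such a power still permutes the cusps and that the ramification of $\pi$ lies over CM points of discriminant tied to $p$ --- should force it into $\langle w\rangle$, hence $u\in\langle w\rangle$, a contradiction. Either way, excluding this ``accidental'' preservation of the CM property at $P$ is where essentially all the work lies; the remainder is formal.
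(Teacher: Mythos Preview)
Your setup is right and you have correctly isolated the crux: one must show that an exceptional rational $u$ cannot send the CM rational point to a CM point. But the two methods you propose for this step are both much heavier than what is needed and neither is actually carried out. The Andr\'e--Oort-style line only yields finiteness of CM points on the correspondence $Z$; you still have to exclude that $\bigl(j(P),(j\circ u)(P)\bigr)$ is one of the finitely many exceptions, and ``Shimura reciprocity'' is not by itself an argument for that. The Riemann--Hurwitz line is too vague to evaluate.

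The paper's proof of this step is short and elementary, and it hinges precisely on the commutativity you dismissed as ``not logically needed''. By Corollary~\ref{Qcomw} one has $uw=wu$, so $u$ descends to a rational automorphism $\bar u$ of $X_{ns}^+(p)$. For any rational CM point $Q\in X_{ns}^+(p)(\mathbb Q)$ the fibre $\pi^{-1}(Q)$ consists of two points defined over the CM field $K_Q$ of the associated elliptic curve. Since $u$ is defined over $\mathbb Q$ and $\pi^{-1}(\bar u(Q))=u\bigl(\pi^{-1}(Q)\bigr)$, the fibre over $\bar u(Q)$ is defined over the \emph{same} field $K_Q$. But distinct rational CM points of $X_{ns}^+(p)$ have distinct CM fields (the nine class-number-one fields), so if $\bar u(Q)$ is again a rational CM point it must equal $Q$. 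Thus either some $\bar u(Q)$ is the desired non-CM rational point, or $\bar u$ fixes every rational CM point; in the latter case $u$ preserves the elliptic points of $X_{ns}(p)$ (these are exactly the $\pi$-fibres over the rational CM points with $j=0$ or $1728$), and since $u$ already preserves the cusps by Theorem~\ref{aut}, Corollary~\ref{ellcuspModular} forces $u$ to be modular --- contradiction. No unlikely-intersections input or fixed-point count is needed.
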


Therefore, Serre's uniformity conjecture implies the non-existence of rational exceptional automorphisms of $X_{ns}(p)$ for almost all $p$ that are inert in at least one of the imaginary quadratic field of class number one.
\bigskip

\noindent\textbf{Acknowledgments}. {I would like to thank Prof. Ren\'e Schoof for many useful remarks on earlier versions of this paper.}

\section{Hyperelliptic modular curves of type $X_{ns}(p)$ and $X_{ns}^+(p)$}

Let $p$ be a prime number. We recall that the modular curve $X_{ns}(p)$ has genus $0$ for $p\le5$ and is elliptic only for $p=7$. The modular curve $X_{ns}^+(p)$ has genus 0 for $p\le 7$ and is elliptic only for $p=11$ (\cite{BaranClass}).

\begin{thm}\label{hyp}
If $p\geq 11$ the modular curve $X_{ns}(p)$ is not hyperelliptic. If $p\geq 13$ the modular curve $X_{ns}^+(p)$ is not hyperelliptic.
\end{thm}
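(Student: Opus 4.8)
The plan is to handle the two curves separately, in both cases by a specialization of Ogg's reduction argument together with the Castelnuovo--Severi inequality, after recording the genus formulas recalled in \S1.

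For $X_{ns}(p)$ with $p\ge 11$, suppose it is hyperelliptic and let $\iota$ be its hyperelliptic involution; being the unique such involution, $\iota$ is defined over $\mathbb Q$ and is central in $\text{Aut}(X_{ns}(p))$. I would first observe that $\iota\neq w$: otherwise $X_{ns}^+(p)=X_{ns}(p)/\langle w\rangle$ would have genus $0$, contradicting the fact recalled in \S1 that $X_{ns}^+(p)$ has positive genus as soon as $p\ge 11$. Since $\iota\neq w$, the two subfields of functions fixed by $\iota$ and by $w$ are distinct of index $2$, hence together generate the whole function field, so the two degree-$2$ maps $X_{ns}(p)\to\mathbb P^1$ (quotient by $\iota$) and $X_{ns}(p)\to X_{ns}^+(p)$ (quotient by $w$) satisfy the hypothesis of Castelnuovo--Severi, which gives $g(X_{ns}(p))\le 2\,g(X_{ns}^+(p))+1$. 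On the other hand Riemann--Hurwitz for $X_{ns}(p)\to X_{ns}^+(p)$ reads $g(X_{ns}(p))=2\,g(X_{ns}^+(p))-1+\tfrac12\#\mathrm{Fix}(w)$, so hyperellipticity would force $w$ to have at most $4$ fixed points on $X_{ns}(p)$. I would then close the argument by checking directly from the genus formulas of \S1 — equivalently, by evaluating $\#\mathrm{Fix}(w)$ as a sum of Hurwitz class numbers attached to the discriminants $-p$ and $-4p$ — that in fact $g(X_{ns}(p))>2\,g(X_{ns}^+(p))+1$, i.e. $w$ has at least $6$ fixed points, for \emph{every} $p\ge 11$ (tightly so at $p=11,13$); this already proves the first assertion for all the required $p$.

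For $X_{ns}^+(p)$ with $p\ge 13$ there is no modular involution to feed into Castelnuovo--Severi, because the normalizer of a non-split Cartan is a maximal subgroup of $\text{GL}_2(\mathbb F_p)$ (so $X_{ns}^+(p)\to X(1)$ admits no intermediate factorization). Instead I would run Ogg's reduction argument: fix a prime $\ell$ of good reduction with the property that there is a supersingular $j$-invariant in characteristic $\ell$ having no extra automorphisms — the smallest such $\ell$ is $13$. If $X_{ns}^+(p)$ were hyperelliptic over $\mathbb Q$ then its reduction modulo $\ell$ would be hyperelliptic, whence $\#X_{ns}^+(p)(\mathbb F_{\ell^2})\le 2(\ell^2+1)$. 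But a supersingular elliptic curve over $\mathbb F_{\ell^2}$ with $j\neq 0,1728$ has Frobenius acting on its $p$-torsion by the scalar $\pm\ell$, which lies in every conjugate of the normalizer of a non-split Cartan; hence the entire fibre of $X_{ns}^+(p)\to X(1)$ above such a $j$, which has $p(p-1)/2$ points, consists of $\mathbb F_{\ell^2}$-rational points. For $p$ large enough (with $\ell=13$, for $p\ge 29$) this contradicts the hyperelliptic bound, and one is left with a short list of small primes to dispose of individually: $X_{ns}^+(13)$ has genus $3$ with a known plane-quartic model, hence is not hyperelliptic, and the remaining $p$ are treated with the explicit models and genus data available in the literature.

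The main obstacle I expect is precisely these small primes. The reduction-mod-$\ell$ count is sharpest at $\ell=13$ but still only settles $X_{ns}(p)$ for $p\ge 19$ and $X_{ns}^+(p)$ for $p\ge 29$, so reaching $p\ge 11$ and $p\ge 13$ relies on the genus comparison (which, happily, covers \emph{all} $p\ge 11$ for $X_{ns}(p)$ and leaves nothing residual there) and on case-by-case input for the handful of small $X_{ns}^+(p)$. A secondary, routine technical point to check is that reduction modulo $\ell\neq 2,p$ preserves hyperellipticity — the hyperelliptic involution is unique hence commutes with reduction, and the quotient conic over $\mathbb F_\ell$ automatically has a rational point — so that the inequality $\#X_{ns}^+(p)(\mathbb F_{\ell^2})\le 2(\ell^2+1)$ is legitimate.
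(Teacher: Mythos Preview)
Your argument for $X_{ns}(p)$ is correct and in fact cleaner than the paper's: the paper runs Ogg's supersingular count at $q=2$, which only reaches $p\ge 13$ and then disposes of $p=11$ by an explicit $\mathbb F_4$-point count on $X_{ns}(11)$, whereas your Castelnuovo--Severi/Riemann--Hurwitz comparison handles every $p\ge 11$ uniformly. One small correction: the fixed points of $w$ on $X_{ns}(p)$ are not governed by the Hurwitz class numbers of $-p,-4p$ (that is the Atkin--Lehner count on $X_0(p)$); they all lie over $j=1728$ and their number is simply $(p-1)/2$ or $(p+1)/2$ according to $p\bmod 4$ (see \cite[Proposition~7.10]{BaranClass}). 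This still gives $\#\mathrm{Fix}(w)\ge 6>4$ for $p\ge 11$, so your conclusion stands.

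For $X_{ns}^+(p)$ your plan works but is needlessly weak, and the residual list it leaves is a genuine gap. The point you are missing is that one does \emph{not} need a supersingular $j\neq 0,1728$: the paper observes that any supersingular curve over $\overline{\mathbb F_q}$ has a model over $\mathbb F_{q^2}$ on which the $q^2$-Frobenius acts as $\pm q$, hence as a scalar lying in every non-split Cartan (and its normalizer). One can therefore use \emph{all} supersingular classes, weighted by the Deuring--Eichler mass formula, and take $q=2$ rather than $\ell=13$. This yields
\[
\frac{p(p-1)}{24}\ \le\ \#X_{ns}^+(p)(\mathbb F_4)\ \le\ 2(4+1)=10,
\]
which already forces $p\le 16$ and leaves only $p=13$. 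The paper then finishes $p=13$ not by citing a model but by computing $\#X_{ns}^+(13)(\mathbb F_4)=11>10$ from the Hecke eigenvalues at $2$ of the newforms of level $169$ (your plane-quartic remark would also work here). By contrast, your choice $\ell=13$ leaves $p=17,19,23$ unaccounted for, and ``explicit models in the literature'' is not a proof for those; either switch to $q=2$ as above, or supply the actual verifications.
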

\begin{proof}
(Following \cite[p. 455, Theorem 3]{OggHyp}). Let $q\neq p$ be a prime number, let $\mathbb F_q$ be the finite field with $q$ elements and let $\overline{\mathbb F_q}$ be an algebraic closure. The points of $X_{ns}(p)(\overline{\mathbb F_q})$ parametrize pairs $(E,\varphi)$, where $E$ is an elliptic curve over $\overline{\mathbb F_q}$ and $\varphi$ is an isomorphism from the $p$-torsion points of $E$ to $\mathbb Z/p\mathbb Z\times \mathbb Z/p \mathbb Z$. Moreover, two pairs $(E,\varphi)$, $(E',\varphi')$ are parametrized by the same point of $X_{ns}(p)(\overline{\mathbb F_q})$ if and only if there exist an isomorphism $f$ from $E$ to $E'$ such that, on the $p$-torsion of $E$, we have $\varphi'\circ f=M\circ\varphi$, where $M$ is an element of a non-split Cartan subgroup $C$ of $\text{GL}_2(\mathbb F_p)$. The Frobenius automorphism $\sigma$ generating $\text{Gal}(\overline{\mathbb{F}_q}/\mathbb F_{q^2})$ acts on $X_{ns}(p)(\overline{\mathbb F_q})$ as $\sigma(E,\varphi)=(E^\sigma,\varphi\circ\sigma^{-1})\text{,}$
so that every elliptic curve $E$ over $\mathbb F_{q^2}$ gives a point $(E,\varphi)$ in $X_{ns}(p)(\mathbb F_{q^2})$ if and only if $\varphi\circ\sigma^{-1}\circ\varphi^{-1}$ is contained in $C$.

Take a supersingular elliptic curve $E$ over $\overline{\mathbb F_q}$. This means that the $q$-torsion of $E$ is trivial, that the endomorphism algebra $\text{End}(E)$ is an order in a quaternion algebra, and implies that $E$ can be defined over $\mathbb F_{q^2}$ (\cite[p. 144, Theorem 3.1, (a)]{Silverman}). The fact that multiplication by $q$ is purely inseparable implies that the $q^2$-th power Frobenius endomorphism $\varphi_{q^2}$ and multiplication by $q$ differ by an automorphism of $E$, which means that $\varphi_{q^2}=\zeta q$ in $\text{End}(E)$ and $\zeta$ is a root of unity. If $j\neq 0,1728$, we have $\varphi_{q^2}=\pm q$. If $j=0,1728$, the elliptic curve $E$ is isomorphic over $\overline{\mathbb F_q}$ to a curve $E'$ defined over $\mathbb F_q$ with equation $y^2+y=x^3$ or $y^2=x^3-x$. We have $\#E'(\mathbb F_q)=q+1$ (\cite[p. 154, Excercise 5.10, (b)]{Silverman} when $q\ge 5$, a direct count when $q=2,3$). Hence the $q$-th power Frobenius endomorphism of $E'$ has characteristic polynomial $x^2+q=0$ which implies that $\varphi_{q^2}=-q$ in $\text{End}(E)$.

This shows that every supersingular elliptic curve over $\overline{\mathbb F_q}$ is isomorphic to a supersingular elliptic curve $E$ defined over $\mathbb F_{q^2}$, with the property that $\varphi_{q^2}$ acts on the $p$-torsion points as multiplication by $q$ or $-q$. Which says that $\varphi_{q^2}$ acts on the $p$-torsion of $E$ as a scalar matrix of $\text{GL}_2(\mathbb F_p)$ and therefore as an element contained in every conjugate of the non-split Cartan subgroup $C$ in $\text{GL}_2(\mathbb F_p)$. Hence, $E$ gives a point in $X_{ns}(p)(\mathbb F_{q^2})$ for each conjugate of $C$ in $\text{GL}_2(\mathbb F_p)$, up to automorphisms of $E$, in the sense that a pair $(E,\varphi)$ gives the same point on $X_{ns}(p)$ as $(E,\varphi\circ f)$ for any automorphism $f$ of $E$. Among the automorphisms of any supersingular elliptic curve, we have multiplication by $-1$, which acts on torsion points as an element contained in $C$. Thus, since the index of a non-split Cartan subgroup in $\text{GL}_2(\mathbb F_p)$ is equal to $p(p-1)$ (see \cite{BaranClass}), we have the following inequality
$$\#X_{ns}(p)(\mathbb F_{q^2})\ge p(p-1)\cdot 2\cdot\sum_{
\small\begin{matrix} E\text{ over }\overline{\mathbb F_q}\\ \text{supersingular}\end{matrix}
}\frac1{\#\textnormal{Aut}(E)}
=\frac{p(p-1)(q-1)}{12}\text{,}$$
where in the last equality we used the Deuring-Eichler formula (\cite[p. 154, Excercise 5.9]{Silverman})
$$\sum_{
\small\begin{matrix} E\text{ over }\overline{\mathbb F_q}\\ \text{supersingular}\end{matrix}
} \frac1{\#\textnormal{Aut}(E)}=\frac{q-1}{24}\text{.}$$
Note that this lower bound on the number of supersingular points can also be easily deduced from \cite[Lemma 3.20, Lemma 3.21]{BJG}.

Suppose now that $X_{ns}(p)$ is hyperelliptic. Then we have
$$\frac{p(p-1)(q-1)}{12}\leq\#X_{ns}(p)(\mathbb F_{q^2})\leq 2(q^2+1)\text{,}$$
where the second inequality holds because $X_{ns}(p)$ is hyperelliptic and because the projective line over $\mathbb F_{q^2}$ has exactly $q^2+1$ points. Now, putting $q=2$ we have
$$ \frac{p(p-1)}{12}\leq\#X_{ns}(p)(\mathbb F_{4})\leq 10$$
and we obtain $p\leq 11$ which ends the proof for $X_{ns}(p)$ in the case $p\geq 13$. With an analogous reasoning, taking into account that the index of the normalizer of a non-split Cartan subgroup in $\text{GL}_2(\mathbb F_p)$ is equal to $\frac{p(p-1)}2$, we obtain the statement for $X_{ns}^+(p)$ in the case $p\geq 17$.

The case $p=11$ for $X_{ns}(p)$ and the case $p=13$ for $X_{ns}^+(p)$ can be worked out by explicitly counting the points over $\mathbb F_4$ of the respective curves. Recall that the Jacobian of $X_{ns}(p)$ is isogenous to the new part of the Jacobian of $X_0(p^2)$ and the Jacobian of $X_{ns}^+(p)$ is isogenous to the new part of the Jacobian of $\displaystyle X_0^*(p^2)\mathop{=}^\text{def}X_0(p^2)/\langle w_{p^2}\rangle$, where $w_{p^2}$ is the Atkin-Lehner involution (\cite{Chen}\cite{Edix}). Then we can count the points over $\mathbb F_4$ by looking at the roots of the characteristic polynomial of a Frobenius at $2$ on the Jacobian of $X_0(p^2)$, and using the well known formula
$$
\#C(\mathbb F_{q^r})=q^r+1-\sum_{i=1}^{2g}\alpha_i^r
$$
where $C$ is a non-singular projective curve of genus $g$ over the finite field $\mathbb F_q$ with $q$ prime, and the $\alpha_i$'s are the roots of the characteristic polynomial of the Frobenius automorphism at $q$, acting on the Jacobian variety of $C$.

Looking at the tables in \cite{SteinTab} of weight-2 newforms for $\Gamma_0(121)$, we see that the Jacobian of $X_{ns}(11)$ is isogenous to the product of four elliptic curves for which the traces of a Frobenius automorphism at 2 are respectively $-1$,$0$,$1$,$2$, so that the roots of the characteristic polynomial of a Frobenius at 2 acting on the Jacobian of $X_{ns}(11)$ are $\frac{-1\pm\sqrt{-7}}2$,$\pm\sqrt{-2}$,$\frac{1\pm\sqrt{-7}}2$,$1\pm i$. Hence $\#X_{ns}(11)(\mathbb F_4)=15>10$.

Now, in the same tables, we look at the weight-2 newforms for $\Gamma_0(169)$. We see that the Jacobian of $X_{ns}^+(13)$ is isogenous to a simple abelian variety $A$ over $\mathbb Q$ of dimension $3$. The eigenvalues of the Hecke operator $T_2$ acting on $A$ are the roots $a_i$, with $i=1,2,3$, of the polynomial $x^3 + 2x^2 - x - 1$. Then the characteristic polynomial of a Frobenius at 2 is the product
$$\prod_{i=1}^3(x^2-a_ix+2)\text{.}$$
It allows us to compute $\#X_{ns}^+(13)(\mathbb F_4)=11>10$.
\end{proof}

\section{Endomorphisms of the jacobian variety of $X_{ns}(p)$}

In this section, for any abelian variety $A$ over a field $K$, an endomorphism of $A$ is an endomorphism defined over an algebraic closure $\overline K$ of $K$, and we write $\text{End}(A)$ for the ring of endomorphisms of $A$ defined over $\overline K$. The endomorphism algebra $\text{End}(A)\otimes \mathbb Q$ is an invariant of the isogeny class of $A$ over $\overline K$. The minimal field where every endomorphism of $A$ is defined is an invariant of the isogeny class over $K$ of $A$. We say that $A$ is of CM-type if $\text{End}(A)\otimes\mathbb Q$ contains a commutative semi-simple $\mathbb Q$-algebra of dimension equal to $2\cdot\text{dim }A$.

\begin{prop}\label{unr}
Let $A$ be a semi-stable abelian variety over a field $K$, complete with respect to a discrete valuation. Then every endomorphism of $A$ is defined over an unramified extension of $K$.
\end{prop}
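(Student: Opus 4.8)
The plan is to translate the statement into one about the action of inertia on a Tate module and then feed in the hypothesis through Grothendieck's semi-stable reduction criterion. Write $R$ for the valuation ring and $k$ for its residue field, fix a prime $\ell$ invertible in $k$ (any $\ell$ if $\mathrm{char}\,k=0$), and let $I=\text{Gal}(\overline K/K^{\mathrm{ur}})$ be the inertia subgroup, $K^{\mathrm{ur}}$ the maximal unramified extension. Since $\text{End}(A)=\text{End}(A_{\overline K})$ is a finitely generated $\mathbb Z$-module and the $\text{Gal}(\overline K/K)$-action on it is continuous for the discrete topology, this action factors through a finite quotient; hence there is a finite Galois extension $L/K$ over which \emph{every} endomorphism of $A$ is defined, and it suffices to prove that $I$ acts trivially on $\text{End}(A)$. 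Indeed, if so, every endomorphism is fixed by $G_L$ and by $I=G_{K^{\mathrm{ur}}}$, hence by the closure of the subgroup they generate, whose fixed field is $L\cap K^{\mathrm{ur}}$; so every endomorphism is already defined over $L\cap K^{\mathrm{ur}}$, which is unramified over $K$. To control the $I$-action I would use the classical injection $\text{End}(A)\hookrightarrow\text{End}(T_\ell A)$ (an endomorphism killing all $\ell$-power torsion lies in $\bigcap_n\ell^n\text{End}(A)=0$), which is compatible with Galois conjugation: writing $\rho\colon\text{Gal}(\overline K/K)\to\text{GL}(T_\ell A)$ for the $\ell$-adic representation and $\phi$ for the image of $f\in\text{End}(A)$, the image of ${}^\sigma\! f$ is $\rho(\sigma)\,\phi\,\rho(\sigma)^{-1}$.

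Now I bring in semi-stability. By Grothendieck's criterion, since $A$ has semi-abelian reduction the restriction $\rho|_I$ is unipotent, the wild inertia acts trivially, and there is a nilpotent monodromy operator $N\in\text{End}(V_\ell A)$, with $V_\ell A=T_\ell A\otimes\mathbb Q_\ell$ and $N^2=0$ (the monodromy filtration of an abelian variety has length $\le 2$), such that $\rho(\sigma)=\exp\big(t_\ell(\sigma)N\big)=\mathrm{id}+t_\ell(\sigma)N$ for \emph{every} $\sigma\in I$, where $t_\ell\colon I\to\mathbb Z_\ell(1)$ is the $\ell$-part of the tame character. Given $f\in\text{End}(A)$, the associated $\phi\in\text{End}(T_\ell A)\subset\text{End}(V_\ell A)$ commutes with $\rho(\sigma)$ for all $\sigma\in G_L$, in particular for all $\sigma$ in the finite-index subgroup $I\cap G_L$ of $I$. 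The image $t_\ell(I\cap G_L)$ is a finite-index, hence nonzero, $\mathbb Z_\ell$-submodule of $\mathbb Z_\ell(1)$, so we may pick $\sigma$ with $t_\ell(\sigma)\neq 0$; then the identity $(\mathrm{id}+t_\ell(\sigma)N)\phi=\phi(\mathrm{id}+t_\ell(\sigma)N)$ reduces (in the $\mathbb Q_\ell$-vector space $\text{End}(V_\ell A)$) to $t_\ell(\sigma)(N\phi-\phi N)=0$, whence $N\phi=\phi N$.

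It follows that $\phi$ commutes with $\rho(\sigma)=\mathrm{id}+t_\ell(\sigma)N$ for every $\sigma\in I$, so ${}^\sigma\! f$ and $f$ have the same image under the injection $\text{End}(A)\hookrightarrow\text{End}(T_\ell A)$ and are therefore equal; thus $I$ acts trivially on $\text{End}(A)$, and the reduction of the first paragraph finishes the proof. The one genuinely non-formal ingredient is the precise shape of Grothendieck's theorem invoked above — that for semi-stable (not merely potentially semi-stable) $A$ the relation $\rho(\sigma)=\exp(t_\ell(\sigma)N)$ holds on \emph{all} of $I$ rather than on some open subgroup of it, which is exactly what forces the field of definition down to $K^{\mathrm{ur}}$ instead of a finite extension of it; the remaining steps are the standard dictionary between endomorphisms and Tate modules together with elementary Galois theory. (An alternative route is to argue directly with Néron models, using that an endomorphism defined over $L$ extends to the Néron model over the ring of integers of $L$ and that for semi-stable $A$ this model behaves well under unramified base change; the Tate-module argument above seems the cleanest.)
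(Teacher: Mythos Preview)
Your argument is correct. The paper itself does not prove this proposition but simply cites Ribet's theorem; the line you take---injecting $\mathrm{End}(A)$ into $\mathrm{End}(T_\ell A)$, using Grothendieck's description of the inertia action for semi-stable abelian varieties (SGA~7~I, Expos\'e~IX) to see that any endomorphism commutes with the monodromy operator $N$ and hence with all of $\rho(I)$, and then descending by Galois theory---is essentially Ribet's own proof in the cited reference, so you have reconstructed the source argument rather than found a genuinely different route.
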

\begin{proof}
\cite[p. 556, Theorem 1.1]{RibetEnd}.
\end{proof}

\begin{prop}\label{quadext}
Let $A$ be an abelian variety over a field $K$. Let $E$ be a subalgebra of $\text{End}_K(A)\otimes\mathbb{Q}$ such that $[E:\mathbb{Q}]=\text{dim }A$ and suppose that $E$ is a product $E_1\times\dots\times E_t$ of totally real number fields. This gives rise to a decomposition $A\sim A_1\times\dots\times A_t$ of abelian varieties up to isogeny over $K$. Suppose that no factor $A_i$ is of CM-type. Then $E$ is its own commutant in $\text{End}(A)\otimes\mathbb{Q}$ and every endomorphism of $A$ is defined over a compositum of quadratic extensions of $K$.
\end{prop}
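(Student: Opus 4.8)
The plan is to establish the two assertions in turn; throughout write $\mathrm{End}^0$ in place of $\mathrm{End}\otimes\mathbb Q$.

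\emph{Step 1: $E$ is its own commutant.} Let $e_1,\dots,e_t\in E\subseteq\mathrm{End}^0_K(A)$ be the primitive idempotents attached to the factors $E_i$, so that $e_i$ cuts out $A_i$ and $E_i=e_iE\hookrightarrow\mathrm{End}^0_K(A_i)$ with $[E_i:\mathbb Q]=\dim A_i$; thus each $A_i$ carries real multiplication by the totally real field $E_i$ of maximal degree $\dim A_i$. If $f\in\mathrm{End}^0_{\overline K}(A)$ commutes with $E$ it commutes with each $e_i$, and applying this to an idempotent of $E$ equal to $1$ on $A_i$ and $0$ on $A_j$ gives $e_jfe_i=0$ for $i\neq j$; hence $f$ is block-diagonal and it suffices to show that the commutant $C_i$ of $E_i$ in $\mathrm{End}^0_{\overline K}(A_i)$ equals $E_i$. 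Since $V_\ell(A_i)$ is free of rank $2$ over $E_i\otimes\mathbb Q_\ell$, the algebra $C_i$ embeds into $M_2(E_i\otimes\mathbb Q_\ell)$, so it is a semisimple $\mathbb Q$-algebra containing $E_i$ in its centre with $[C_i:E_i]\le 4$, and it is stable under a Rosati involution. Here I would invoke the Albert classification together with positivity of the Rosati involution: running through the possibilities for a $C_i$ strictly larger than $E_i$ — a quaternion division algebra over $E_i$, the matrix algebra $M_2(E_i)$, or a quadratic field extension of $E_i$ — each of these contains a commutative semisimple $\mathbb Q$-subalgebra of dimension exactly $2\dim A_i$, so that $A_i$ would be of CM-type, contrary to hypothesis. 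Therefore $C_i=E_i$, and consequently the commutant of $E$ in $\mathrm{End}^0_{\overline K}(A)$ is $E$.

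\emph{Step 2: fields of definition.} Let $L/K$ be the (finite, Galois) minimal field of definition of all endomorphisms of $A$, and set $G=\mathrm{Gal}(L/K)$, $C=\mathrm{End}^0_L(A)=\mathrm{End}^0_{\overline K}(A)$. Then $G$ acts faithfully on $C$ by $\mathbb Q$-algebra automorphisms fixing $E$ pointwise; since $E$ is its own commutant, $Z(C)\subseteq E$, so $G$ fixes $Z(C)$ pointwise, in particular fixes the central idempotents of $C$. By the Skolem--Noether theorem each $\sigma\in G$ acts as conjugation by a unit $u_\sigma\in C^\times$, well defined modulo $Z(C)^\times$, and since $\sigma$ fixes $E$ the element $u_\sigma$ lies in the commutant of $E$, i.e. $u_\sigma\in E^\times$; the map $\sigma\mapsto u_\sigma\bmod Z(C)^\times$ is an injective homomorphism of $G$ into the abelian group $E^\times/Z(C)^\times$, so $G$ is abelian. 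To see that $G$ is $2$-torsion, fix a polarization of $A$ defined over $K$ and let $\dagger$ be the corresponding Rosati involution on $C$; it commutes with the $G$-action, and comparing $\sigma(x^\dagger)$ with $\sigma(x)^\dagger=\big(u_\sigma x u_\sigma^{-1}\big)^\dagger$ for all $x$ yields $u_\sigma^\dagger u_\sigma\in Z(C)$, hence $u_\sigma^\dagger\in E^\times$. Now $R_\sigma=\mathbb Q[u_\sigma,u_\sigma^\dagger]$ is a $\dagger$-stable commutative semisimple subalgebra of $E$, a product of totally real fields, and positivity of $\dagger$ forces $\dagger|_{R_\sigma}$ to be the identity: $\dagger$ cannot exchange two of the simple factors (orthogonal idempotents $e,e'$ would give $\mathrm{Tr}_{C/\mathbb Q}(e\cdot e^\dagger)=0$ with $e\neq 0$), and it cannot act as a non-trivial involution on a totally real factor (that would make the trace form indefinite). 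Hence $u_\sigma^\dagger=u_\sigma$, so $u_\sigma^2=u_\sigma^\dagger u_\sigma\in Z(C)$ and $\sigma^2$ is conjugation by a central element, i.e. $\sigma^2=1$. Thus $G$ is an elementary abelian $2$-group, and $L$ — hence the field of definition of every endomorphism of $A$ — is a compositum of quadratic extensions of $K$.

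\emph{The main obstacle} is Step 1: identifying the commutant $C_i$. It relies on the full Albert classification of endomorphism algebras and on a precise dimension count against the particular definition of ``CM-type'' used here, the content being that any enlargement of the real-multiplication field $E_i$ necessarily produces a commutative semisimple subalgebra of $\mathrm{End}^0(A_i)$ of dimension exactly $2\dim A_i$. By comparison Step 2 is a standard Galois-descent argument; its one delicate point is that the Rosati involution need not preserve $E$ itself, which is why one passes to the automatically $\dagger$-stable subalgebra $\mathbb Q[u_\sigma,u_\sigma^\dagger]$ before invoking positivity.
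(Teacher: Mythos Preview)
The paper does not prove this proposition at all --- it simply cites \cite[p.~557, Theorem~2.3]{RibetEnd}. What you have written is a faithful reconstruction of Ribet's argument, and it is essentially correct.

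Two small points on Step~1. First, your list of possibilities for $C_i\supsetneq E_i$ omits the split case $C_i\cong E_i\times E_i$; this is harmless, since it too is commutative semisimple of $\mathbb Q$-dimension $2\dim A_i$. Second, the invocation of ``Albert classification together with positivity of the Rosati involution'' is a bit of a red herring here: once you know $C_i$ is a semisimple $E_i$-algebra with $E_i$ central and $[C_i:E_i]\le 4$ (from the rank-$2$ freeness of $V_\ell(A_i)$ over $E_i\otimes\mathbb Q_\ell$, which is the one genuinely nontrivial input and is proved in Ribet's paper), a direct case split on $[C_i:E_i]\in\{2,4\}$ --- the value $3$ being excluded since $M_2$ over a field has no $3$-dimensional semisimple subalgebra --- already produces the required commutative semisimple subalgebra of dimension $2\dim A_i$, without appealing to Albert or to Rosati at this stage. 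Your Step~2 is clean; the delicate point you flag (that $\dagger$ need not preserve $E$, so one works instead with the $\dagger$-stable $\mathbb Q[u_\sigma,u_\sigma^\dagger]$) is exactly the subtlety Ribet handles.
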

\begin{proof}
\cite[p. 557, Theorem 2.3]{RibetEnd}
\end{proof}

Let $J_0(N)$ be the Jacobian variety of the modular curve $X_0(N)$. It is an abelian variety over $\mathbb Q$.

\begin{prop}\label{semist}
The abelian variety $J_0(N)$ is semi-stable at each prime $p$ such that $p^2$ does not divide $N$.
\end{prop}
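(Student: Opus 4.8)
The plan is to deduce the semi-stability of $J_0(N)$ at a prime $p$ from the existence of a semi-stable model of the curve $X_0(N)$ over $\mathbb{Z}_p$. Recall the general principle that if a smooth, proper, geometrically connected curve over the fraction field of a discrete valuation ring admits a proper flat model whose special fiber is reduced with at worst ordinary double points, then its Jacobian has semi-stable reduction, i.e. the connected component of the identity in the special fiber of the N\'eron model is an extension of an abelian variety by a torus (this follows from the theory of N\'eron models and the relative Picard functor; see Bosch--L\"utkebohmert--Raynaud, \emph{N\'eron Models}, Ch.~9, or Deligne--Mumford). So it suffices to produce such a model of $X_0(N)$ at every prime $p$ with $p^2\nmid N$.

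When $p\nmid N$ this is immediate: the moduli-theoretic model of $X_0(N)$ over $\mathbb{Z}[1/N]$ constructed by Deligne--Rapoport, and revisited in \cite{KM}, is smooth and proper over $\mathbb{Z}_p$, so $X_0(N)$ — and hence $J_0(N)$ — has good reduction at $p$, which is in particular semi-stable.

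The remaining case is $p\mid N$ with $p^2\nmid N$; write $N=pM$ with $p\nmid M$. Here I would invoke the Deligne--Rapoport model of $X_0(pM)$ over $\mathbb{Z}_p$, whose special fiber is the union of two copies of $X_0(M)_{\mathbb{F}_p}$ — corresponding to the two ways a cyclic $p$-isogeny degenerates in characteristic $p$, namely via Frobenius and via Verschiebung — meeting transversally at the supersingular points. This special fiber is reduced with only nodal singularities, so the model is semi-stable; consequently $J_0(N)$ is semi-stable at $p$, and in fact the reduction of its N\'eron model is an extension by a torus of the product $J_0(M)_{\mathbb{F}_p}\times J_0(M)_{\mathbb{F}_p}$, the character group of the torus being the first homology of the dual graph of the special fiber (whose independent cycles are contributed by the supersingular crossing points).

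The only genuine obstacle is the precise description of the special fiber of $X_0(N)$ over $\mathbb{Z}_p$ when $p$ exactly divides $N$; but this is exactly the Deligne--Rapoport / Katz--Mazur theory of integral models of $X_0(N)$ and can be quoted directly. Granting it, the semi-stability of $J_0(N)$ at such $p$ is formal.
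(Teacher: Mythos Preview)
Your argument is correct and is essentially the content of the paper's own proof, which consists solely of the citation \cite[p.~286, Theorem~6.9]{DelRap}; you have merely unpacked what that Deligne--Rapoport result says about the integral model of $X_0(N)$ when $p^2\nmid N$ and invoked the standard passage from a nodal model of the curve to semi-stability of its Jacobian. One small caution: in this paper the label \cite{KM} points to Kenku--Momose rather than Katz--Mazur, so if you keep that reference you should adjust it.
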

\begin{proof}
\cite[p. 286, Theorem 6.9]{DelRap}.
\end{proof}

For every integer $N>0$, there exists  a set of representatives $f_1,\dots,f_t$ of $\text{Gal}(\overline{\mathbb Q}/\mathbb Q)$-conjugacy classes of normalized weight-2 cuspforms for $\Gamma_0(N)$, such that every $f_i$ is a newform at some level $M_i$ dividing $N$, and $\{f_i(n\tau),n|N/M_i,i=1\dots t\}$ with their $\text{Gal}(\overline{\mathbb Q}/\mathbb Q)$-conjugates form a basis of the complex vector space of weight-2 cuspforms for $\Gamma_0(N)$. There is a decomposition of abelian varieties up to isogeny over $\mathbb Q$
\begin{equation}\label{dec}
J_0(N)\sim A_1^{m_i}\times\dots\times A_t^{m_t}
\end{equation}
where $\displaystyle A_i\mathop{=}^\text{def}A_{f_i}$ is the abelian variety up to isogeny over $\mathbb Q$ associated to the $\text{Gal}(\overline{\mathbb Q}/\mathbb Q)$-conjugacy class of $f_i$, and $m_i$ is the number of divisors of $N/M_i$  (see \cite[Section 6.6]{DS}). Every $A_i$ is a simple abelian variety over $\mathbb Q$.

Let $J_0^C(N)$ be the abelian variety up to isogeny over $\mathbb Q$ which is a product of the factors $A_i^{m_i}$ such that $A_i$ is of CM-type and let $J_0^H(N)$ be the product of the remaining factors of the decomposition above. We recall here that if $A_i$ is of CM-type, then it is isogenous to a product of copies of an elliptic curve with complex multiplication by an imaginary quadratic field $K$ of discriminant $-D$ and $f_i$ is a weight-2 cuspform for $\Gamma_0(D\cdot \text{N}(\mathfrak c))$, where $\mathfrak c$ is the conductor of a primitive Hecke character of $K$ (see \cite[p. 138, Proposition 1.6]{ShimuraClassFields}\cite{ShimuraCM}). Any factor $A_i$ of $J_0^H(N)$ can't be isogenous to any factor $A_j$ of $J_0^C(N)$ because $A_i$ and $A_j$ have different endomorphism algebras. Hence, we have $J_0(N)\sim J_0^H(N)\times J_0^C(N)$ and $\text{End}(J_0(N))\otimes\mathbb Q\cong(\text{End}(J_0^H(N))\otimes \mathbb Q)\times(\text{End}(J_0^C(N))\otimes\mathbb Q)$.

Let $p$ be an odd prime number. We call $K(p)=\mathbb{Q}(\sqrt p)$ if $p\equiv1\text{ mod }4$ and $K(p)=\mathbb{Q}(\sqrt{-p})$ if $p\equiv3\text{ mod }4$.
\begin{prop}\label{fielddef0}
Every endomorphism of $J_0^H(p^2)$ is defined over $K(p)$.
\end{prop}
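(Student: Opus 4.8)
The plan is to apply Proposition \ref{quadext} to the abelian variety $A=J_0^H(p^2)$ over $K=\mathbb Q$, and then to cut down the resulting compositum of quadratic extensions to the single field $K(p)$ using ramification constraints coming from Propositions \ref{unr} and \ref{semist}. First I would verify the hypotheses of Proposition \ref{quadext}: by the Hecke theory recalled above, $\text{End}_{\mathbb Q}(J_0^H(p^2))\otimes\mathbb Q$ contains the product $E=\prod_i E_i$ of the (totally real, since the $f_i$ have real Fourier coefficients in the non-CM case) Hecke eigenvalue fields $E_i=\mathbb Q(\{a_n(f_i)\})$, and $\sum_i m_i[E_i:\mathbb Q]=\dim J_0^H(p^2)$; the decomposition $J_0^H(p^2)\sim\prod_i A_i^{m_i}$ has no factor of CM-type by the very definition of $J_0^H$. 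Hence Proposition \ref{quadext} applies and gives that every endomorphism of $J_0^H(p^2)$ is defined over some compositum $L$ of quadratic extensions of $\mathbb Q$.

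Next I would control the ramification of $L/\mathbb Q$. Since $p^2\mid p^2$ but no other square divides $p^2$, Proposition \ref{semist} tells us $J_0(p^2)$ — hence its isogeny factor $J_0^H(p^2)$ — is semi-stable at every prime $\ell\neq p$. Applying Proposition \ref{unr} at each such $\ell$ (base-changing to $\mathbb Q_\ell$) shows that the field of definition of the endomorphisms is unramified at $\ell$. Therefore $L$, which by the previous paragraph may be taken to be generated by finitely many square roots and is itself the field of definition of all endomorphisms, is unramified outside $p$. A compositum of quadratic fields unramified outside $p$ is contained in $\mathbb Q(\sqrt{\pm1},\sqrt{\pm p})$; since such a compositum is also unramified at $2$ if $p$ is odd — here one must be slightly careful, as $\mathbb Q(i)$, $\mathbb Q(\sqrt2)$, $\mathbb Q(\sqrt{-2})$ are ramified at $2$, so ramification at $2$ must be excluded separately — the field $L$ must be contained in the unique quadratic field ramified only at (an odd prime) $p$, namely $K(p)=\mathbb Q(\sqrt{p^*})$ with $p^*=(-1)^{(p-1)/2}p$.

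The main obstacle, and the step requiring genuine input rather than formal manipulation, is ruling out ramification at $2$ (and, if one is not careful, at $\infty$): semi-stability of $J_0(p^2)$ at $2$ via Proposition \ref{semist} only gives that the endomorphisms are defined over an extension of $\mathbb Q_2$ that is \emph{unramified} at $2$, which is exactly what is needed, so in fact the argument goes through cleanly provided one invokes Proposition \ref{unr} at the prime $2$ as well — the point being that $2^2\nmid p^2$. So the real content is simply the observation that $J_0^H(p^2)$ is semi-stable at \emph{all} primes $\ell\neq p$, including $\ell=2$, together with the elementary fact that the maximal multiquadratic extension of $\mathbb Q$ unramified outside an odd prime $p$ is precisely $K(p)$. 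I would close by noting that this last fact follows from the conductor-discriminant formula, or more simply from the structure of $\mathbb Q(\zeta_p)$ and the observation that any quadratic field unramified outside $p$ sits inside $\mathbb Q(\zeta_{p})$ for $p$ odd, whose unique quadratic subfield is $K(p)$.
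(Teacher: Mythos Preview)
Your argument is correct and follows exactly the paper's strategy: apply Proposition~\ref{quadext} to get a multiquadratic field of definition, then use Propositions~\ref{unr} and~\ref{semist} to force that field to be unramified outside $p$, hence equal to $K(p)$. One small quibble: to match the hypothesis $[E:\mathbb Q]=\dim A$ when some $m_i>1$ (the old factors of $J_0^H(p^2)$ coming from level $p$ appear with multiplicity $2$), you should take $E=\prod_i E_i^{m_i}$ rather than $E=\prod_i E_i$.
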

\begin{proof}
Proposition \ref{quadext} tells us that endomorphisms of $J_0^H(p^2)$ can be defined over a field $K$ which is a compositum of quadratic extensions of $\mathbb Q$. Furthermore, Propositions \ref{unr} and \ref{semist} imply that $K$ can be taken unramified outside $p$. Thus, we can take $K=\mathbb Q(\sqrt p)$ if $p\equiv 1\text{ mod }4$ and $K=\mathbb{Q}(\sqrt{-p})$ if $p\equiv3\text{ mod }4$.
\end{proof}

Let $g_0(N)$, $g_0^H(N)$, $g_0^C(N)$ be respectively the dimensions of $J_0(N)$, $J_0^H(N)$, $J_0^C(N)$.

Let $J_{ns}(p)$ be the Jacobian of the modular curve $X_{ns}(p)$. The abelian variety $J_{ns}(p)$ is isogenous over $\mathbb Q$ to the new part of $J_0(p^2)$ (see \cite{Chen}, \cite{Edix}). Let $J_0^\text{new}(p^2)$ and $J_0^\text{old}(p^2)$ be respectively the new and the old part of $J_0(p^2)$ and let's fix an isomorphism $\varphi$ from $\text{End}(J_{ns}(p))\otimes\mathbb Q$ to $\text{End}(J_0^\text{new}(p^2))\otimes\mathbb Q$. We say, with abuse of notation, that an automorphism $v$ of $X_{ns}(p)$, induces the invertible element $\displaystyle v\mathop{=}^\text{def}\varphi(v)$ of
\begin{align*}
\text{End}(J_0^\text{new}(p^2))\otimes\mathbb Q&\subset(\text{End}(J_0^\text{new}(p^2))\otimes\mathbb Q)\times(\text{End}(J_0^\text{old}(p^2))\otimes\mathbb Q)\\
&\subset\text{End}(J_0(p^2))\otimes\mathbb Q
\end{align*}
where the first inclusion is given by sending $v$ to the invertible element that acts like $v$ on the new part and as the identity on the old part.

We can then define in an analogous way $J_{ns}^H(p)$, $J_{ns}^C(p)$, as factors of the new part of the decomposition (\ref{dec}). We also define the dimensions $g_{ns}(p)$, $g_{ns}^H(p)$, $g_{ns}^C(p)$. We have $J_{ns}(p)\sim J_{ns}^H(p)\times J_{ns}^C(p)$ and $\text{End}(J_{ns}(p))\otimes\mathbb Q\cong(\text{End}(J_{ns}^H(p))\otimes \mathbb Q)\times(\text{End}(J_{ns}^C(p))\otimes\mathbb Q)$.

\begin{thm}\label{thm1}
If $g_{ns}(p)>p$ every automorphism of $X_{ns}(p)$ is defined over $K(p)$.
\end{thm}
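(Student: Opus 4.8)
The plan is to descend the automorphism $v$ along the isogeny decomposition $J_{ns}(p)\sim J_{ns}^H(p)\times J_{ns}^C(p)$: Proposition \ref{fielddef0} takes care of the non-CM part, and the CM part turns out, under the hypothesis $g_{ns}(p)>p$, to be too small to obstruct descent to $K(p)$.

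First I would pass from the curve to its Jacobian. Since the hypothesis forces $g_{ns}(p)\geq 2$, an automorphism $v$ of $X_{ns}(p)$ is determined by the endomorphism $v_\ast$ it induces on $J_{ns}(p)$, and $v\mapsto v_\ast$ is $\text{Gal}(\overline{\mathbb Q}/\mathbb Q)$-equivariant; hence $v$ and $v_\ast$ have the same field of definition, and it is enough to show $v_\ast^\tau=v_\ast$ for every $\tau\in\text{Gal}(\overline{\mathbb Q}/K(p))$. Using the isomorphism $\text{End}(J_{ns}(p))\otimes\mathbb Q\cong(\text{End}(J_{ns}^H(p))\otimes\mathbb Q)\times(\text{End}(J_{ns}^C(p))\otimes\mathbb Q)$ I write $v_\ast=(v_H,v_C)$. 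The two projections are defined over $\mathbb Q$, so conjugation by $\tau$ acts componentwise, and by Proposition \ref{fielddef0} --- applied through the fixed identification of $\text{End}(J_{ns}(p))\otimes\mathbb Q$ with $\text{End}(J_0^\text{new}(p^2))\otimes\mathbb Q$ inside $\text{End}(J_0(p^2))\otimes\mathbb Q$ --- one has $v_H^\tau=v_H$. Thus only the possibility $v_C^\tau\neq v_C$ must be excluded.

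For a fixed $\tau$, consider $w:=v^\tau\circ v^{-1}$, which is again an automorphism of $X_{ns}(p)$ because $X_{ns}(p)$ is defined over $\mathbb Q$. By the previous step $w_\ast=(\text{id},\,v_C^\tau v_C^{-1})$, so $w$ acts as the identity on the sub-abelian variety $J_{ns}^H(p)$. I would then use the classical observation (as in \cite{KM}, \cite{BakerHas}) that a nontrivial automorphism $w$ of a curve $C$ of genus $g$ which acts trivially on a sub-abelian variety $A\subseteq\text{Jac}(C)$ satisfies $\dim A\leq\frac{g+1}{2}$: the idempotent $\frac1m\sum_{i=0}^{m-1}w_\ast^i$ (with $m=\text{ord}(w)$) is the identity on $A$ and its image is isogenous to the Jacobian of $C/\langle w\rangle$, whose genus is $\leq\frac{g+1}{2}$ by Riemann--Hurwitz when $m\geq 2$. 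Applying this with $A=J_{ns}^H(p)$, if $w\neq\text{id}$ then $g_{ns}(p)-g_{ns}^C(p)\leq\frac{g_{ns}(p)+1}{2}$, i.e. $g_{ns}(p)\leq 2g_{ns}^C(p)+1$. Hence, if we know $2g_{ns}^C(p)+1<g_{ns}(p)$, then $w=\text{id}$ for every $\tau$ and $v$ is defined over $K(p)$.

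It remains to deduce $2g_{ns}^C(p)+1<g_{ns}(p)$ from $g_{ns}(p)>p$, for which it suffices to prove $g_{ns}^C(p)\leq\frac{p-1}{2}$. For this I would look at the weight-$2$ CM newforms of level $p^2$ that build up $J_{ns}^C(p)$: such a newform has complex multiplication by an imaginary quadratic field $K$ with $|d_K|\cdot\text{N}(\mathfrak c)=p^2$, where $\mathfrak c$ is the conductor of the attached Hecke character; since $|d_K|\geq 3$ divides $p^2$ this forces $|d_K|=p$ (so $p\equiv 3\bmod 4$ and $K=K(p)$, and $g_{ns}^C(p)=0$ when $p\equiv 1\bmod 4$) and $\text{N}(\mathfrak c)=p$, so $\mathfrak c$ is the ramified prime above $p$. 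Counting the Hecke characters of $K(p)$ of this fixed conductor and infinity type, after imposing triviality of the nebentypus and identifying $\psi$ with its conjugate $\psi^\sigma$, bounds their number, hence $g_{ns}^C(p)$, by $\frac{p-1}{2}$ (in fact by a quantity of size $O(h(-p))$). I expect this last counting step --- making the bound $g_{ns}^C(p)\leq\frac{p-1}{2}$ precise --- to be the main point requiring care; the rest is the descent argument above.
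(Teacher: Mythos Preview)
Your proposal is correct and follows essentially the same route as the paper: form $w=v^{\tau}v^{-1}$, use Proposition~\ref{fielddef0} to see that $w_\ast$ is trivial on the non-CM factor $J_{ns}^H(p)$, compare $g_{ns}^H(p)$ with the genus of $X_{ns}(p)/\langle w\rangle$ via Riemann--Hurwitz to obtain $g_{ns}(p)\le 2g_{ns}^C(p)+1$ if $w\neq\mathrm{id}$, and then bound $g_{ns}^C(p)$.

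The only substantive difference is in this last bound. The paper does not count Hecke characters directly; instead it observes that a CM factor of level $p^2$ must have CM by $\mathbb Q(\sqrt{-p})$ (forcing $p\equiv 3\pmod 4$, and $g_{ns}^C(p)=0$ otherwise), and then that the CM elliptic curves with complex multiplication by $\mathbb Q(\sqrt{-p})$ form a single $\mathrm{Gal}(\overline{\mathbb Q}/\mathbb Q)$-orbit of size $h(-p)$, so $g_{ns}^C(p)=h(-p)$ exactly. The inequality $h(-p)\le\frac{p-1}{2}$ then drops out of Dirichlet's class number formula $h(-p)=-\frac1p\sum_{m=1}^{p-1}m\bigl(\frac mp\bigr)$ by bounding the Legendre symbol by $1$. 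This is more direct than the Hecke-character count you sketch (which would of course give the same answer, since the characters in question are in bijection with the class group), and it is worth knowing that the step you flagged as ``requiring care'' in fact has this one-line solution.
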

\begin{proof}
(Following \cite[p. 55, Lemma 1.4]{KM}) Let $u$ be an automorphism of $X_{ns}(p)$ and $\sigma$ a element of $\text{Gal}(\overline{\mathbb Q}/K(p))$. We define $v=u^\sigma u^{-1}$ and let $d$ be the order of $v$ in the automorphism group of $X_{ns}(p)$. We call $Y$ the quotient of $X_{ns}(p)$ by the automorphism $v$. Let $g_Y$ be the genus of $Y$. We have that $v$ induces an automorphism of the Jacobian $J_{ns}(p)\sim J_{ns}^H(p)\times J_{ns}^C(p)$ which acts trivially on $J_{ns}^H(p)$ by Proposition \ref{fielddef0}, because $J_{ns}^H(p)$ is a factor of $J_0^H(p^2)$. Thus we have the following morphisms of abelian varieties with finite kernel
\begin{align*}
J_{ns}^H(p)&\rightarrow J_{ns}^H(p)\times J_{ns}^C(p)\rightarrow J_{ns}(p)\\
x&\mapsto (x,0)
\end{align*}
where the image of the composition is contained in $\displaystyle J_{ns}(p)^v\mathop{=}^\text{def}\{D\in J_{ns}(p)\text{ : }vD=D\}$. Hence there are morphisms of group varieties
$$J_{ns}^H(p)\longrightarrow J_{ns}(p)^v \longrightarrow \text{Jac}(Y)$$
where the second map is the push-forward of divisors, and the composition is a morphism of abelian varieties. Furthermore, both maps have finite kernel, hence $g_Y\geq g_{ns}^H(p)$ and we have by the Riemann-Hurwitz formula
$$g_{ns}(p)-1\geq d(g_Y-1)\geq 2g_{ns}^H(p)-2\text{,}$$
where in the second inequality we supposed $d>1$. Now using $g_{ns}(p)=g_{ns}^H(p)+g_{ns}^C(p)$ we obtain $g_{ns}(p)\leq 2g_{ns}^C(p)+1$.
Since $J_{ns}(p)$ is isogenous over $\mathbb{Q}$ to the new part of $J_0(p^2)$ we have $g_{ns}^C(p)=g_0^C(p^2)$. Furthermore $g_0^C(p^2)=0$ if $p\equiv1\text{ mod }4$ and $g_0^C(p^2)=h(-p)$ if $p\equiv3\text{ mod }4$, where $h(-p)$ is the class number of $\mathbb Q(\sqrt{-p})$. Indeed, in the decomposition (\ref{dec}) with $N=p^2$, if a factor is of CM-type, then it is the product of copies of an elliptic curve with complex multiplication by an imaginary quadratic field $K$ of discriminant $-p$. This leaves no possibilities if $p\equiv1\text{ mod }4$ and it implies $K=\mathbb Q(\sqrt{-p})$ when $p\equiv3\text{ mod }4$. In the latter case the dimension is $h(-p)$ because by the the theory of complex multiplication, the elliptic curves with complex multiplication by $\mathbb Q(\sqrt{-p})$ are all isogenous, $\text{Gal}(\overline{\mathbb Q}/\mathbb Q)$-conjugate and their cardinality is $h(-p)$.

Then we can estimate $h(-p)$ using Dirichlet's class number formula:
$$h(-p)=-\frac1p\sum_{m=1}^{p-1}m\left(\frac mp\right)\leq \frac1p\sum_{m=1}^{p-1}m=\frac{p-1}2$$
where the first equality is \cite[p. 51, (19)]{Davenport} (we are assuming $p\equiv 3\text{ mod }4$ and $p\neq 3$), and in the inequality we used that the Legendre symbol $\displaystyle\left(\frac mp\right)$ takes values in $\{\pm 1\}$. This implies $g_{ns}(p)\leq p$. Contradiction.
\end{proof}
\begin{cor}\label{Kp}
If $p\geq 11$ every automorphism of $X_{ns}(p)$ is defined over $K(p)$.
\end{cor}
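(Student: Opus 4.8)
The plan is to re-examine the proof of Theorem \ref{thm1} rather than invoke it as a black box, since its hypothesis $g_{ns}(p)>p$ fails precisely for the small primes $p=11,13,17$. What that proof actually establishes is the following: if $X_{ns}(p)$ carries an automorphism not defined over $K(p)$, then some automorphism $v\neq\mathrm{id}$ acts trivially on $J_{ns}^H(p)$, and the Riemann--Hurwitz estimate used there, together with $g_{ns}(p)=g_{ns}^H(p)+g_{ns}^C(p)$, forces
\[
g_{ns}(p)\leq 2\,g_{ns}^C(p)+1 .
\]
So it suffices to check, for every prime $p\geq 11$, that $g_{ns}(p)>2\,g_{ns}^C(p)+1$; this contradiction then rules out any automorphism outside $K(p)$ and proves the corollary.

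For the left-hand side I would use that $J_{ns}(p)$ is isogenous to $J_0^\text{new}(p^2)$ while $J_0^\text{old}(p^2)\sim J_0(p)^2$ (newforms of level dividing $p^2$ sit at levels $1,p,p^2$, and those of level $p$ occur with multiplicity $2$), so that $g_{ns}(p)=g_0(p^2)-2\,g_0(p)$; substituting the classical genus formula for $X_0(N)$ gives, for $p\geq 5$,
\[
g_{ns}(p)=\frac{1}{12}\!\left(p^2-7p+11+3\left(\tfrac{-1}{p}\right)+4\left(\tfrac{-3}{p}\right)\right).
\]
For the right-hand side I would use, exactly as in Theorem \ref{thm1}, that $g_{ns}^C(p)=g_0^C(p^2)$ equals $0$ when $p\equiv 1\bmod 4$ and $h(-p)$ when $p\equiv 3\bmod 4$.

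The verification then splits into three cases. If $p\equiv 1\bmod 4$ and $p\geq 11$ (so $p\geq 13$), the required inequality is just $g_{ns}(p)>1$, immediate from the displayed formula. For $p=11$ one has $h(-11)=1$, hence $2\,g_{ns}^C(11)+1=3<4=g_{ns}(11)$. Finally, for $p\geq 19$ with $p\equiv 3\bmod 4$, Dirichlet's bound $h(-p)\leq (p-1)/2$ (already used in Theorem \ref{thm1}) gives $2\,g_{ns}^C(p)+1\leq p$, so it is enough that $g_{ns}(p)>p$; from the formula $g_{ns}(p)\geq (p^2-7p+4)/12$, and $(p^2-7p+4)/12>p$ for all $p\geq 19$. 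The only delicate point — the reason the corollary does not follow verbatim from Theorem \ref{thm1} — is handling $p=11,13,17$, where $g_{ns}(p)\leq p$: there one must go back into the proof and exploit that $g_{ns}^C(p)$ is in fact $0$ or $1$, not merely $\leq (p-1)/2$.
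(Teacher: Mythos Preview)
Your proof is correct. For $p\geq 19$ and for $p=13,17$ your argument is essentially the paper's: the paper invokes \cite[Theorem 7.2]{BaranClass} for $g_{ns}(p)>p$ when $p\geq 19$, and for $p=13,17$ uses $g_{ns}^C(p)=0$ together with Proposition~\ref{fielddef0} directly; you instead carry the single inequality $g_{ns}(p)>2g_{ns}^C(p)+1$ through all cases, which amounts to the same thing.

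Where you genuinely differ is at $p=11$. The paper appeals to the explicit determination of $\mathrm{Aut}(X_{ns}(11))$ in \cite{Cat}, whereas you observe that $h(-11)=1$ gives $g_{ns}^C(11)=1$, so $2g_{ns}^C(11)+1=3<4=g_{ns}(11)$ and the Riemann--Hurwitz argument from Theorem~\ref{thm1} already produces the contradiction. This is a nice uniform treatment: it avoids the external reference and shows that nothing special happens at $p=11$ beyond the smallness of the class number. The paper's route, on the other hand, gives the full automorphism group of $X_{ns}(11)$ as a byproduct (the Klein four group), information your argument does not yield. One small point worth making explicit in your write-up: the step $d(g_Y-1)\geq 2(g_Y-1)$ in the chain of inequalities requires $g_Y\geq 1$, which holds here since $g_Y\geq g_{ns}^H(p)$ and $g_{ns}^H(p)\geq 3$ for every $p\geq 11$ by your own computation of $g_{ns}(p)$ and $g_{ns}^C(p)$.
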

\begin{proof}
By \cite[Theorem 7.2]{BaranClass}, we have that $g_{ns}(p)>p$ for $p\geq 19$. Regarding $p=13$, $17$, they are both congruent to $1$ modulo $4$, so in these cases $g_{ns}^C(p)=g_0^C(p^2)=0$ and we can use Proposition \ref{fielddef0} together with the isogeny over $\mathbb{Q}$ between $J_{ns}(p)$ and the new part of $J_0(p^2)$. The automorphisms of $X_{ns}(11)$ are explicitly computed in \cite{Cat} and they are all defined over $\mathbb Q$.
\end{proof}
The modular curve $X_{ns}(7)$ has genus $1$, and it has the automorphisms defined as translation by points. Thus, in this case there are automorphisms defined over larger number fields.

\begin{prop}\label{HeckeQ}
If an automorphism of $X_{ns}(p)$ is defined over $\mathbb Q$, then it induces an element of $\text{End}(J_0^H(p^2))\otimes\mathbb{Q}$ contained in the Hecke algebra.
\end{prop}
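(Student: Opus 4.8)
The plan is to pin down the subalgebra of $\textnormal{End}(J_0^H(p^2))\otimes\mathbb{Q}$ coming from $\mathbb{Q}$-rational endomorphisms and to check that it is exactly the Hecke algebra, using the explicit decomposition (\ref{dec}). First, an automorphism $u$ of $X_{ns}(p)$ defined over $\mathbb{Q}$ induces, by functoriality of the Jacobian, an automorphism of $J_{ns}(p)$ defined over $\mathbb{Q}$; since the isogeny between $J_{ns}(p)$ and the new part of $J_0(p^2)$ is defined over $\mathbb{Q}$, the isomorphism $\varphi$ is $\textnormal{Gal}(\overline{\mathbb{Q}}/\mathbb{Q})$-equivariant, so $v=\varphi(u)$ lies in $\textnormal{End}_{\mathbb{Q}}(J_0^{\textnormal{new}}(p^2))\otimes\mathbb{Q}$, and its image in $\textnormal{End}(J_0^H(p^2))\otimes\mathbb{Q}$ (acting as the identity on the old part and as the $H$-component of $v$ on the new part) is a $\mathbb{Q}$-rational endomorphism. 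Thus it suffices to show that such an element is a Hecke operator.

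I would prove this by computing two algebras. The new part of $J_0^H(p^2)$, which up to isogeny over $\mathbb{Q}$ is $J_{ns}^H(p)$, is the product of the abelian varieties $A_{f_i}$ attached to the Galois-conjugacy classes of the non-CM weight-$2$ newforms $f_i$ of level $p^2$. For such an $f_i$ the theory of abelian varieties attached to newforms of weight $2$ with trivial nebentypus (see \cite[Section 6.6]{DS}; for the fact that a non-CM $f_i$ produces no endomorphisms of $A_{f_i}$ over $\mathbb{Q}$ beyond the field generated by its Fourier coefficients, see \cite{RibetEnd}) gives $\textnormal{End}_{\mathbb{Q}}(A_{f_i})\otimes\mathbb{Q}=E_{f_i}$, the totally real field $\mathbb{Q}(\{a_n(f_i)\})$; and distinct Galois-conjugacy classes give abelian varieties with distinct $L$-functions, hence not isogenous over $\mathbb{Q}$, so $\textnormal{End}_{\mathbb{Q}}(J_{ns}^H(p))\otimes\mathbb{Q}\cong\prod_i E_{f_i}$. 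On the other hand $T_n$ acts on $A_{f_i}$ as multiplication by $a_n(f_i)$, so the image of the Hecke algebra is the $\mathbb{Q}$-subalgebra of $\prod_i E_{f_i}$ generated by the tuples $(a_n(f_i))_i$; since each $E_{f_i}$ is generated over $\mathbb{Q}$ by the $a_n(f_i)$, and since the new Hecke algebra of level $p^2$ tensored with $\mathbb{Q}$ is precisely the product over Galois-conjugacy classes of newforms of the fields $E_f$, this subalgebra is all of $\prod_i E_{f_i}$. Comparing, $\textnormal{End}_{\mathbb{Q}}(J_{ns}^H(p))\otimes\mathbb{Q}$ coincides with the Hecke algebra, so the $H$-component of $v$ belongs to it; for the old factors of $J_0^H(p^2)$ one only uses that $v$ acts there as the identity, that $T_1=1$, and that the Hecke algebra tensored with $\mathbb{Q}$ splits as the product of its new and old parts (new and old subvarieties being orthogonal Hecke-stable summands), so a single Hecke operator realizes the image of $v$ on all of $J_0^H(p^2)$.

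I expect the main obstacle to be the input $\textnormal{End}_{\mathbb{Q}}(A_{f_i})\otimes\mathbb{Q}=E_{f_i}$: one must rule out, for a non-CM newform of trivial nebentypus, extra real multiplications or quaternionic multiplication defined over $\mathbb{Q}$. This is exactly where the non-CM hypothesis is used, and it ultimately rests on the centralizer of the image of the $\ell$-adic Galois representation attached to $f_i$ inside $\textnormal{End}(V_\ell A_{f_i})$ being exactly $E_{f_i}\otimes\mathbb{Q}_\ell$ --- which fails for a CM newform, where this image lies in a Cartan subgroup and the centralizer is strictly larger. An alternative route is to extract it from Proposition \ref{quadext}, which already gives that $\prod_i E_{f_i}$ is its own commutant in $\textnormal{End}(J_0^H(p^2))\otimes\mathbb{Q}$, so that it would be enough to know that $\textnormal{End}_{\mathbb{Q}}(J_{ns}^H(p))\otimes\mathbb{Q}$ is commutative, together with Proposition \ref{fielddef0} and a Skolem--Noether argument descending the endomorphism algebra from $K(p)$ to $\mathbb{Q}$; granted that, the comparison of the two algebras above is routine bookkeeping with the Atkin--Lehner--Li theory of newforms.
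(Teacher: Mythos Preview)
Your argument is correct, but it takes a different route from the paper's. The paper argues dynamically: using good reduction away from $p$ and the Eichler--Shimura relation $T_l\equiv\mathrm{Frob}_l+l\,\mathrm{Frob}_l^{-1}\pmod l$, it shows that any $\mathbb{Q}$-rational $u$ commutes with every $T_l$ ($l\neq p$), hence with the whole Hecke algebra on the new part; then Proposition~\ref{quadext} (the Hecke algebra is its own commutant on $J_0^H(p^2)$, since no factor is CM) forces $u$ into the Hecke algebra. You instead argue structurally: you identify $\mathrm{End}_{\mathbb{Q}}(J_{ns}^H(p))\otimes\mathbb{Q}$ with $\prod_i E_{f_i}$ outright, citing Ribet's result that $\mathrm{End}_{\mathbb{Q}}(A_f)\otimes\mathbb{Q}=E_f$ for non-CM $f$ with trivial nebentypus, and observe this product is exactly the new Hecke algebra; the old part is then handled by the new/old idempotent in $\mathbb{T}\otimes\mathbb{Q}$.

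Both approaches ultimately rest on the same Ribet input, just packaged differently: your key black box $\mathrm{End}_{\mathbb{Q}}(A_f)\otimes\mathbb{Q}=E_f$ is itself most cleanly proved via Eichler--Shimura plus the commutant statement, i.e.\ the paper's mechanism applied factor by factor. The paper's version has the advantage of staying within the propositions already set up (only Proposition~\ref{quadext} is invoked), and it makes visible the Eichler--Shimura step that is reused later in Lemma~\ref{uTl=Tlu}. Your version has the advantage of proving the stronger statement that \emph{every} $\mathbb{Q}$-rational endomorphism of $J_{ns}^H(p)$ is Hecke, not merely those coming from curve automorphisms; and the splitting $\mathbb{T}\otimes\mathbb{Q}\cong\mathbb{T}^{\mathrm{new}}\otimes\mathbb{Q}\times\mathbb{T}^{\mathrm{old}}\otimes\mathbb{Q}$ (which follows from strong multiplicity one) makes the passage to all of $J_0^H(p^2)$ transparent.
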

\begin{proof}
Since $J_{ns}(p)$ is isogenous over $\mathbb{Q}$ to the new part of $J_0(p^2)$, an automorphism $u$ of $X_{ns}(p)$ defined over $\mathbb Q$ induces an element of $\text{End}_{\mathbb Q}(J_0(p^2))\otimes\mathbb Q\cong(\text{End}_{\mathbb Q}(J_{ns}^H(p))\otimes \mathbb Q)\times(\text{End}_{\mathbb Q}(J_{ns}^C(p))\otimes\mathbb Q)$, as explained above. Since $J_0(p^2)$ has good reduction outside $p$, the reduction modulo $l$ map is injective on the endomorphism ring of $J_0(p^2)$ for every prime number $l\neq p$. Hence, by the Eichler-Shimura relations, $u$ commutes with every Hecke operator $T_l$ with $l\neq p$, because $u$ is defined over $\mathbb Q$. Moreover, the Hecke algebra acting on the new part is generated by the operators $T_l$ with $l\neq p$, thus $u$ commutes with the whole Hecke algebra. Then, to prove the Proposition, we note that every factor of $J_0^H(p^2)$ is without complex multiplication and so the Hecke algebra is its own commutant in $\text{End}(J_0^H(p^2))\otimes \mathbb Q$ (Proposition \ref{quadext}).
\end{proof}
\begin{lem}
If $g_{ns}(p)>p$ every automorphism of $X_{ns}(p)$ defined over $\mathbb Q$ commutes with the modular involution $w$.
\end{lem}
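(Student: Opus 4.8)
The plan is to prove that, for every automorphism $u$ of $X_{ns}(p)$ defined over $\mathbb{Q}$, the commutator $v=uwu^{-1}w^{-1}$ is trivial, by combining Proposition \ref{HeckeQ} with the Riemann-Hurwitz argument already used in the proof of Theorem \ref{thm1}. Since $w$ is an involution I rewrite $v=uwu^{-1}w$; as $X_{ns}(p)$ has genus at least $2$, this is an automorphism of $X_{ns}(p)$ of some finite order $d$, and the goal is to show $d=1$.

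First I would check that $v$ acts as the identity on $J_{ns}^H(p)$. By Proposition \ref{HeckeQ}, $u$ induces on the non-CM new part of $J_0(p^2)$ an element of the Hecke algebra, which is generated by the operators $T_l$ with $l\neq p$. Under the isogeny $J_{ns}(p)\sim J_0^\text{new}(p^2)$, compatible with the identification of $X_{ns}^+(p)$ with the new part of $X_0^*(p^2)$, the modular involution $w$ corresponds to the Atkin-Lehner involution $w_{p^2}$ (up to a sign, which is immaterial here), and $w_{p^2}$ commutes with every $T_l$ for $l\neq p$. Hence $u$ and $w$ commute on $J_{ns}^H(p)$, so $v$ acts there as the identity.

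Next I would repeat the argument of Theorem \ref{thm1} essentially verbatim. Assume $d>1$ and set $Y=X_{ns}(p)/\langle v\rangle$, of genus $g_Y$. Since $v$ acts trivially on $J_{ns}^H(p)$, the composition
$$J_{ns}^H(p)\longrightarrow J_{ns}(p)^v\longrightarrow \text{Jac}(Y)$$
(the last arrow being push-forward of divisors) has finite kernel, so $g_Y\geq g_{ns}^H(p)$, and the Riemann-Hurwitz formula gives
$$g_{ns}(p)-1\geq d(g_Y-1)\geq 2g_{ns}^H(p)-2\text{,}$$
whence $g_{ns}(p)\leq 2g_{ns}^C(p)+1$. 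As in the proof of Theorem \ref{thm1} one has $g_{ns}^C(p)=g_0^C(p^2)\leq\frac{p-1}{2}$, so $g_{ns}(p)\leq p$, contradicting the hypothesis $g_{ns}(p)>p$. Therefore $d=1$, i.e. $v$ is the identity and $u$ commutes with $w$.

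The only step not already contained in the proof of Theorem \ref{thm1} is the second one, so that is where I expect the (mild) difficulty: getting right the identification of the modular involution $w$ with the Atkin-Lehner operator $w_{p^2}$ on the new part of $J_0(p^2)$. Once that is in place, the commutation of $w_{p^2}$ with the $T_l$ ($l\neq p$), combined with Proposition \ref{HeckeQ}, forces $u$ and $w$ to commute on the non-CM part, and the genus bound finishes the proof.
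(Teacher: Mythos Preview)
Your proposal is correct and follows essentially the same structure as the paper's proof: show that the commutator $v=uwu^{-1}w^{-1}$ acts trivially on $J_{ns}^H(p)$, then repeat the Riemann--Hurwitz argument of Theorem \ref{thm1} verbatim to conclude $v$ is trivial.

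The only difference is in how you obtain that $u$ and $w$ commute on $J_{ns}^H(p)$. You go through the identification of $w$ with the Atkin--Lehner involution $w_{p^2}$ under the isogeny $J_{ns}(p)\sim J_0^{\text{new}}(p^2)$, and then invoke the classical commutation of $w_{p^2}$ with the $T_l$. This works, but it is the step you yourself flag as delicate, and it is not needed. The paper's argument is simpler: since $w$ is itself an automorphism of $X_{ns}(p)$ defined over $\mathbb Q$, Proposition \ref{HeckeQ} applies to $w$ just as it does to $u$. Hence \emph{both} $u$ and $w$ induce elements of the Hecke algebra of $J_0^H(p^2)$, and that algebra is commutative, so they commute there. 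No identification with $w_{p^2}$ is required.
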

\begin{proof}
Let $u$ be an automorphism of $X_{ns}(p)$ defined over $\mathbb Q$. Then, by the previous Proposition, $u$ and $w$ induce elements in the Hecke algebra of $J_0^H(p^2)$, which is a commutative algebra. Hence, if we define $v=uwu^{-1}w^{-1}$ we have that $v$ induces an invertible element of $\text{End}(J_0(p^2))\otimes\mathbb Q$ which acts trivially on $J_0^H(p^2)$ and $J_{ns}^H(p)$. Then we can use the same proof as for Theorem $\ref{thm1}$ to see that $v$ is trivial if $g_{ns}(p)>p$.
\end{proof}
\begin{cor}\label{Qcomw}
If $p\geq 11$ every automorphism of $X_{ns}(p)$ defined over $\mathbb Q$ commutes with the modular involution $w$.
\end{cor}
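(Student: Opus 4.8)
The plan is to split according to whether the hypothesis $g_{ns}(p)>p$ of the preceding Lemma is satisfied. For $p\ge 19$, \cite[Theorem 7.2]{BaranClass} gives $g_{ns}(p)>p$, so the Lemma applies directly and there is nothing further to prove. The content of the Corollary is therefore just to settle the three primes $p=11,13,17$, for which $g_{ns}(p)\le p$ and the Lemma does not apply; these I would treat exactly as the small primes are treated in the proof of Corollary \ref{Kp}.

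For $p=13$ and $p=17$, both congruent to $1$ modulo $4$, one has $g_{ns}^C(p)=g_0^C(p^2)=0$, so that $J_{ns}(p)\sim J_{ns}^H(p)$. Let $u$ be an automorphism of $X_{ns}(p)$ defined over $\mathbb Q$; the modular involution $w$ is defined over $\mathbb Q$ as well. By Proposition \ref{HeckeQ}, both $u$ and $w$ induce elements of the Hecke algebra acting on $J_0^H(p^2)$; this algebra being commutative, $u$ and $w$ commute in $\text{End}(J_{ns}(p))\otimes\mathbb Q$. Since $X_{ns}(p)$ has genus at least $2$ and, by Theorem \ref{hyp}, is not hyperelliptic, its canonical map is an embedding, so the natural homomorphism $\text{Aut}(X_{ns}(p))\to\text{Aut}(J_{ns}(p))$ is injective; therefore $uw=wu$ as automorphisms of the curve. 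For $p=11$, I would simply quote \cite{Cat}: the automorphism group of $X_{ns}(11)$ is the Klein four group generated by $w$ and an exceptional involution, hence abelian, so every automorphism commutes with $w$.

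I do not anticipate a genuine obstacle here: the substance of the statement, namely the Riemann-Hurwitz and Jacobian-dimension estimate imported from Theorem \ref{thm1}, is already carried out inside the Lemma, and the Corollary merely packages the reduction together with the finitely many exceptional primes. The only point calling for a little care is the passage from the commutation relation among the induced endomorphisms of $J_{ns}(p)$ to the corresponding relation among the automorphisms of $X_{ns}(p)$ themselves, which relies on the injectivity just described; the rest is bookkeeping.
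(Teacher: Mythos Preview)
Your argument is correct and mirrors the paper's proof almost exactly: the case split $p\ge 19$ versus $p\in\{11,13,17\}$, the use of $g_{ns}^C(p)=0$ for $p=13,17$ together with Proposition~\ref{HeckeQ} and commutativity of the Hecke algebra, and the appeal to \cite{Cat} for $p=11$ are all the same. The only cosmetic difference is that for the injectivity of $\text{Aut}(X_{ns}(p))\to\text{Aut}(J_{ns}(p))$ the paper simply cites \cite[Lemma~2.1]{BakerHas} (valid for any curve of genus $\ge 2$), whereas you route through Theorem~\ref{hyp} and the canonical embedding; both are fine, though the non-hyperellipticity is not actually needed here.
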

\begin{proof}
As for Corollary \ref{Kp}, we have that $g_{ns}(p)>p$ for $p\geq 19$. Regarding $p=13$, $17$, in these cases $g_{ns}^C(p)=g_0^C(p^2)=0$. Moreover $g_{ns}(p)\geq 2$, so the natural map from the automorphism group of $X_{ns}(p)$ to the automorphism group of $J_{ns}(p)$ is injective (\cite[Lemma 2.1]{BakerHas}),  and we can use Proposition \ref{HeckeQ} together with the fact that the Hecke algebra is commutative. The automorphism group of $X_{ns}(11)$ is the Klein four group (see \cite{Cat}).
\end{proof}
\begin{cor}
If $p\ge 11$ every automorphism of $X_{ns}(p)$ defined over $\mathbb Q$ induces an automorphism of $X_{ns}^+(p)$ defined over $\mathbb Q$.
\end{cor}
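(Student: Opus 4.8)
The plan is to derive this as an immediate consequence of Corollary \ref{Qcomw}. Let $u$ be an automorphism of $X_{ns}(p)$ defined over $\mathbb Q$, and let $\pi\colon X_{ns}(p)\to X_{ns}^+(p)$ be the canonical degree-$2$ morphism, i.e. the quotient by $\langle w\rangle$; note that $\pi$ is defined over $\mathbb Q$ since $w$ is the modular involution and $X_{ns}^+(p)$ is the modular curve over $\mathbb Q$ attached to the normalizer of a non-split Cartan subgroup. By Corollary \ref{Qcomw} we have $uw=wu$ for $p\geq 11$, so $u$ carries each fibre $\{x,wx\}$ of $\pi$ to another such fibre; equivalently $\pi\circ u$ is constant on the fibres of $\pi$. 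Since $\pi$ is a finite surjective morphism, $\pi\circ u$ factors through $\pi$, yielding a unique morphism $\bar u\colon X_{ns}^+(p)\to X_{ns}^+(p)$ with $\pi\circ u=\bar u\circ\pi$. Applying the same construction to $u^{-1}$ produces a two-sided inverse for $\bar u$, so $\bar u\in\text{Aut}(X_{ns}^+(p))$.

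Next I would check that $\bar u$ is defined over $\mathbb Q$. Since $X_{ns}^+(p)$, the map $\pi$, and the automorphism $u$ are all defined over $\mathbb Q$, for every $\sigma\in\text{Gal}(\overline{\mathbb Q}/\mathbb Q)$ the conjugate $\bar u^{\sigma}$ satisfies $\bar u^{\sigma}\circ\pi=(\bar u\circ\pi)^{\sigma}=(\pi\circ u)^{\sigma}=\pi\circ u=\bar u\circ\pi$. By the uniqueness of the factorization through $\pi$ (which holds because $\pi$ is dominant), this forces $\bar u^{\sigma}=\bar u$. Hence $\bar u$ is fixed by the whole Galois group and is therefore defined over $\mathbb Q$, which is the assertion.

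There is no substantial obstacle here; the only points requiring care are that $\pi$ and $w$ are genuinely defined over $\mathbb Q$ (part of the construction of $X_{ns}^+(p)$ as a modular curve over $\mathbb Q$, together with $w$ being a modular automorphism), and the uniqueness of the descent of $u$ through $\pi$. Everything else is a formal consequence of the commutation $uw=wu$ supplied by Corollary \ref{Qcomw}.
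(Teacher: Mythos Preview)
Your proposal is correct and follows exactly the route the paper intends: the corollary is stated without proof precisely because it is an immediate consequence of Corollary~\ref{Qcomw}, via the standard descent of an automorphism commuting with $w$ to the quotient $X_{ns}^+(p)=X_{ns}(p)/\langle w\rangle$. Your verification that $\bar u$ is defined over $\mathbb Q$ by Galois-conjugating the factorization $\pi\circ u=\bar u\circ\pi$ is the right way to make this explicit.
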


\section{Automorphisms of $X_{ns}(p)$ preserving the cusps}\label{secPresCusp}

Let $X_H$ be a modular curve. Then the orbit space $\mathcal H\cup\mathbb Q\cup\{\infty\}/\Gamma_H$ can be identified with $X_H(\mathbb C)$. Let $E\subset \mathcal H$ be the set of ramification points of the natural quotient map $\pi:\mathcal H\rightarrow \mathcal H/\Gamma_H$. This is a discrete set in $\mathcal H$ and it is the counter image of the set of elliptic points in $X_H(\mathbb C)$. For every $z\in \mathcal H\setminus E$ there exists a neighborhood $U\ni z$ such that $\gamma_i(U)\cap \gamma_j(U)=\emptyset$ for every $\gamma_i\neq\pm\gamma_j$ in $\Gamma_H$. This means that $\mathcal H\setminus E$ is a proper covering space of $X_H(\mathbb C)$ without the cusps and the elliptic points.

\begin{prop}
Let $u$ be an automorphism of $X_H$. If $u$ preserves the set of cusps and preserves the set of elliptic points of $X_H$, then $u$ is induced by an automorphism of the Riemann surface $\mathcal H$ preserving $E$.
\end{prop}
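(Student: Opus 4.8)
The plan is to use covering space theory together with the local structure of $\pi$ near the elliptic points. Write $Y=X_H(\mathbb C)$, let $S\subset Y$ be the finite set consisting of the cusps and the elliptic points, and set $Y^\circ=Y\setminus S$; by the discussion above $p:=\pi|_{\mathcal H\setminus E}\colon\mathcal H\setminus E\to Y^\circ$ is a regular covering map whose group of deck transformations is $\overline\Gamma_H$, the image of $\Gamma_H$ in $\text{PSL}_2(\mathbb R)$. Since $u$ permutes the cusps and permutes the elliptic points (preserving their orders), we have $u(S)=S$, so $u$ restricts to a biholomorphism $u^\circ\colon Y^\circ\to Y^\circ$. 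The goal is to lift $u^\circ$ through $p$ to a map $\tilde u\colon\mathcal H\setminus E\to\mathcal H\setminus E$ with $p\circ\tilde u=u^\circ\circ p$, and then to check that $\tilde u$ extends to an automorphism of $\mathcal H$ inducing $u$.

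First I would identify the subgroup $N:=p_*\pi_1(\mathcal H\setminus E)$ of $\pi_1(Y^\circ)$: since $p$ is a regular covering with deck group $\overline\Gamma_H$, the subgroup $N$ is normal and is the kernel of the monodromy surjection $\pi_1(Y^\circ)\twoheadrightarrow\overline\Gamma_H$. Because $\mathcal H$ is simply connected and $\overline\Gamma_H$ acts on it properly discontinuously, $\overline\Gamma_H$ is the orbifold fundamental group of $\mathcal H/\overline\Gamma_H$, i.e. of the surface $Y$ minus the cusps equipped with a cone point of order $e_i$ at each elliptic point $P_i$; using the local model $z\mapsto z^{e_i}$ of $p$ over $P_i$ (and the infinite cyclic local model over each cusp) one gets that $N$ is precisely the normal closure in $\pi_1(Y^\circ)$ of the elements $\gamma_i^{e_i}$, where $\gamma_i$ is a small loop around $P_i$ and no power of a loop around a cusp is killed. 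Now $u$ permutes the $P_i$ preserving their orders, so $u^\circ_*$ carries each $\gamma_i$ to a conjugate of $\gamma_{\tau(i)}^{\pm 1}$ with $e_{\tau(i)}=e_i$, hence sends each generator $\gamma_i^{e_i}$ of $N$ into $N$; thus $u^\circ_*(N)\subseteq N$, and applying the same argument to $u^{-1}$ gives $u^\circ_*(N)=N$.

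Since $(u^\circ\circ p)_*\pi_1(\mathcal H\setminus E)=u^\circ_*(N)=N=p_*\pi_1(\mathcal H\setminus E)$, the lifting criterion produces a continuous $\tilde u\colon\mathcal H\setminus E\to\mathcal H\setminus E$ with $p\circ\tilde u=u^\circ\circ p$; a lift $\tilde v$ of $(u^\circ)^{-1}\circ p$ has the property that $\tilde v\circ\tilde u$ and $\tilde u\circ\tilde v$ are lifts of $p$ through $p$, hence deck transformations, so $\tilde u$ is a homeomorphism, and being a local homeomorphism compatible with the complex structures it is biholomorphic. To extend $\tilde u$ over $E$: a sequence in $\mathcal H\setminus E$ converging in $\mathcal H$ to a point $e\in E$ lying over $P_i$ eventually lies in a small punctured disc $D^\ast$ around $e$, and $\tilde u(D^\ast)$ is connected and lands in $p^{-1}(\Delta^\ast)$ for a small punctured disc $\Delta^\ast$ around $u(P_i)$, which by the local structure of $p$ over the elliptic point $u(P_i)$ is a disjoint union of punctured discs; hence $\tilde u(D^\ast)$ lies in a single such punctured disc, around some $e'\in E$ over $u(P_i)$, and no subsequence of the image can accumulate away from $e'$ because its $p$-image tends to the centre $u(P_i)$. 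So $\tilde u$ extends continuously to $\phi\colon\mathcal H\to\mathcal H$ with $\phi(E)=E$; doing the same for $\tilde v$ gives a continuous inverse, so $\phi$ is a homeomorphism, and being holomorphic off the discrete set $E$ and continuous it is holomorphic, i.e. $\phi\in\text{Aut}(\mathcal H)$. Finally $\pi\circ\phi=u\circ\pi$ holds on the dense set $\mathcal H\setminus E$, hence on all of $\mathcal H$, so $\phi$ is an automorphism of $\mathcal H$ preserving $E$ and inducing $u$ (and, as a by-product, $\phi$ normalizes $\overline\Gamma_H$, so $u$ is modular).

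I expect the main obstacle to be the equality $u^\circ_*(N)=N$, and the point where the hypothesis is genuinely used is that $u$ must preserve the \emph{orders} of the elliptic points, not merely their set: for $X(1)=\mathbb P^1$, for instance, the automorphism $j\mapsto 1728-j$ fixes the unique cusp and interchanges the two elliptic points but swaps their orders, and it is not induced from $\mathcal H$. So ``preserves the set of elliptic points'' has to be read in the sharper, order-respecting sense, which is in any case forced by the conclusion, since an automorphism of $\mathcal H$ normalizing $\overline\Gamma_H$ conjugates point stabilizers to point stabilizers. The remaining subtlety, extending the lift across the discrete set $E$, is handled by the local-structure argument above together with Riemann's removable singularity theorem.
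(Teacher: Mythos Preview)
Your approach is essentially the same as the paper's: restrict $u$ to the complement $Y^\circ$ of cusps and elliptic points, lift through the covering $\mathcal H\setminus E\to Y^\circ$ via the lifting criterion, and then extend across $E$. The paper does exactly this but much more tersely: where you explicitly identify $N=p_*\pi_1(\mathcal H\setminus E)$ as the normal closure of the $\gamma_i^{e_i}$ and verify $u^\circ_*(N)=N$, the paper simply asserts that the inclusion $(u\circ\pi)_*\pi_1(\mathcal H\setminus E)\subset\pi_*\pi_1(\mathcal H\setminus E)$ holds ``because $u$ actually extends to an automorphism of $X_H(\mathbb C)$ preserving $C$ and $\pi(E)$''; and where you argue the extension over $E$ via the local model and Riemann's removable singularity theorem, the paper writes only that the lift ``extends uniquely to $\mathcal H$''.

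Your remark about orders is well taken and is in fact a genuine correction to the paper's formulation. Your $X(1)$ example ($j\mapsto 1728-j$ preserves the unique cusp and the set $\{0,1728\}$ of elliptic points but swaps their orders, and is not induced from $\mathcal H$ since $\mathrm{Norm}(\mathrm{PSL}_2(\mathbb Z))=\mathrm{PSL}_2(\mathbb Z)$) shows both that the proposition as stated is too weak and that the paper's one-line justification of the lifting criterion---that $u$ extends over the elliptic points---cannot by itself force $u^\circ_*(N)\subset N$. Your proof repairs this by explicitly using that $u$ preserves the orders $e_i$, which is the correct hypothesis. In the paper's actual applications this nuance is harmless: in Corollary~\ref{ellcuspModular} with $p\equiv 1\pmod{12}$ there are no elliptic points, and in Theorem~\ref{unifAut} the argument in fact fixes the relevant CM points individually, so the orders are automatically preserved.
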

\begin{proof}
Let $C$ be the set of cusps on $X_H$. Note that an automorphism $u$ of $X_H$ preserving the cusps and the elliptic points, defines naturally an automorphism of the open Riemann surface $X_H(\mathbb C)\setminus (C\cup \pi(E))$. If $X_H$ has no elliptic points ($E=\emptyset$), then the quotient $\mathcal H\rightarrow \mathcal H/\Gamma_H\cong X_H(\mathbb C)\setminus C$ is the universal cover of $X_H(\mathbb C)\setminus C$. Then $u$ automatically lifts to an automorphism of $\mathcal H$. If instead $E\neq \emptyset$, to lift $u$ to $\mathcal H\setminus E$, we need the push forward by $u\circ \pi$ of the fundamental group of $\mathcal H\setminus E$ to be contained in its push forward by $\pi$. But this is true because $u$ actually extends to an automorphism of $X_H(\mathbb C)$ preserving $C$ and $\pi(E)$. Hence $u$ lifts to an automorphism of $\mathcal H\setminus E$, which extends uniquely to $\mathcal{H}$.
\end{proof}

Let $p$ be a prime number.

\begin{prop}
Suppose that the genus of $X_{ns}(p)$ is at least 2. Let $\Gamma_{ns}(p)$ be the subgroup of $\textnormal{SL}_2(\mathbb Z)$ made up of the elements which reduce modulo $p$ to an element in a non-split Cartan subgroup of $\textnormal{GL}_2(\mathbb F_p)$. Then the subgroup $$B(X_{ns}(p))=\textnormal{Norm}(\Gamma_{ns}(p))/\Gamma_{ns}(p)\subset\textnormal{Aut}(X_{ns}(p))$$
is generated by the modular involution $w$.
\end{prop}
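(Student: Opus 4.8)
The plan is to compute the Fuchsian group $\textnormal{Norm}(\Gamma_{ns}(p))$ explicitly, in the spirit of the classical computation of the normalizer of $\Gamma_0(N)$: reduce to a local question at each prime. Since the genus of $X_{ns}(p)$ is at least $2$, the group $\textnormal{Aut}(X_{ns}(p))$ is finite, hence $B(X_{ns}(p))$ is finite and $\textnormal{Norm}(\Gamma_{ns}(p))$ contains $\Gamma_{ns}(p)$ with finite index. In particular $\textnormal{Norm}(\Gamma_{ns}(p))$ is commensurable with $\textnormal{SL}_2(\mathbb Z)$, and each of its elements conjugates the finite-index subgroup $\Gamma_{ns}(p)$ of $\textnormal{SL}_2(\mathbb Z)$ to itself, hence commensurates $\textnormal{SL}_2(\mathbb Z)$. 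By the classical description of the commensurator of $\textnormal{SL}_2(\mathbb Z)$ in $\textnormal{PSL}_2(\mathbb R)$, every element of $\textnormal{Norm}(\Gamma_{ns}(p))$ is represented, modulo scalars, by a primitive integral matrix $M$ with $\det M>0$. As $-I\in\Gamma_{ns}(p)$, any element of $\textnormal{SL}_2(\mathbb R)$ which is projectively integral is already in $\textnormal{SL}_2(\mathbb Z)$, so it suffices to show every such $M$ has $\det M=1$, and then to identify $\textnormal{Norm}_{\textnormal{SL}_2(\mathbb Z)}(\Gamma_{ns}(p))$.

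To control $\det M$ I would work $\ell$-adically. Conjugation by $M$ is continuous on $\textnormal{PGL}_2(\mathbb Q_\ell)$, so $M$ normalizes the closure there of the image of $\Gamma_{ns}(p)$. For $\ell\neq p$ this closure is the image of $\textnormal{SL}_2(\mathbb Z_\ell)$, since $\Gamma(p)\subseteq\Gamma_{ns}(p)$ already surjects onto $\textnormal{SL}_2(\mathbb Z/\ell^k\mathbb Z)$ for every $k$; because $\textnormal{SL}_2(\mathbb Z_\ell)$ acts transitively on the neighbours of the standard vertex $v_0$ of the Bruhat--Tits tree of $\textnormal{PGL}_2(\mathbb Q_\ell)$, it fixes no edge at $v_0$, hence fixes only $v_0$. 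Therefore $M$ stabilizes $v_0$, and since $M$ is integral and primitive this forces $M\in\textnormal{GL}_2(\mathbb Z_\ell)$, so $v_\ell(\det M)=0$ for every $\ell\neq p$.

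The essential case is $\ell=p$. Here the $p$-adic closure of the image of $\Gamma_{ns}(p)$ is the image of $K_p$, the full preimage in $\textnormal{SL}_2(\mathbb Z_p)$ of $C\cap\textnormal{SL}_2(\mathbb F_p)$, where $C$ is a non-split Cartan subgroup. The key point is that $C\cap\textnormal{SL}_2(\mathbb F_p)$ is the norm-$1$ subgroup of $\mathbb F_{p^2}^*\cong C$, cyclic of order $p+1$, and its image in $\textnormal{PGL}_2(\mathbb F_p)$ acts \emph{freely} on $\mathbb P^1(\mathbb F_p)$ — the set of neighbours of $v_0$ — in two orbits of size $(p+1)/2\geq 2$, hence without fixed points. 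Consequently $K_p$, exactly like $\textnormal{SL}_2(\mathbb Z_p)$, fixes no edge at $v_0$ and so fixes only $v_0$; thus $M$ stabilizes $v_0$ and, being primitive and integral, lies in $\textnormal{GL}_2(\mathbb Z_p)$, giving $v_p(\det M)=0$. Combining all primes yields $\det M=1$, i.e. $M\in\textnormal{SL}_2(\mathbb Z)$, so $\textnormal{Norm}(\Gamma_{ns}(p))\subseteq\textnormal{SL}_2(\mathbb Z)$ and equals the preimage under reduction modulo $p$ of $\textnormal{Norm}_{\textnormal{SL}_2(\mathbb F_p)}(C\cap\textnormal{SL}_2(\mathbb F_p))$.

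Finally I would identify this finite normalizer. Since the genus hypothesis forces $p\geq 11$, the cyclic group $C\cap\textnormal{SL}_2(\mathbb F_p)$ has order $p+1>2$, hence contains a regular semisimple element, whose centralizer in $\textnormal{GL}_2(\mathbb F_p)$ is precisely $C$; so any element normalizing $C\cap\textnormal{SL}_2(\mathbb F_p)$ normalizes its centralizer $C$, and $\textnormal{Norm}_{\textnormal{GL}_2(\mathbb F_p)}(C\cap\textnormal{SL}_2(\mathbb F_p))=\textnormal{Norm}_{\textnormal{GL}_2(\mathbb F_p)}(C)$, the full normalizer of the Cartan subgroup, of index $2$ over $C$. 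Intersecting with $\textnormal{SL}_2(\mathbb F_p)$ gives a subgroup containing $C\cap\textnormal{SL}_2(\mathbb F_p)$ with index $2$; pulling back, $B(X_{ns}(p))=\textnormal{Norm}(\Gamma_{ns}(p))/\Gamma_{ns}(p)$ has order $2$, and by construction its non-trivial element is the modular involution $w$. The main obstacle is the local analysis at $p$: establishing the fixed-point-free action of the non-split torus on $\mathbb P^1(\mathbb F_p)$ and using it to force $M$ to fix the standard vertex of the tree; the commensurator input is classical, and the concluding step is elementary finite group theory.
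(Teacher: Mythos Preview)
Your argument is correct and follows the same underlying idea as the paper's proof: show that $\textnormal{Norm}(\Gamma_{ns}(p))$ fixes the standard lattice class, hence lies in $\textnormal{SL}_2(\mathbb Z)$, and then compute the normalizer modulo~$p$. The packaging, however, is genuinely different. The paper works globally in Conway's picture of homothety classes of lattices commensurable with $\mathbb Z\times\mathbb Z$: it quotes a lemma of Lang listing the finitely many lattice classes fixed by $\Gamma(p)$, and then checks by a direct matrix computation that $\Gamma_{ns}(p)$ fixes only $\mathbb Z\times\mathbb Z$ among them. You instead localize and use the Bruhat--Tits tree of $\textnormal{PGL}_2(\mathbb Q_\ell)$ at each prime, which is the same object decomposed prime by prime. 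Your key step at $\ell=p$ --- that the norm-$1$ subgroup of $\mathbb F_{p^2}^*$ has no fixed point on $\mathbb P^1(\mathbb F_p)$ --- is exactly the content of the paper's explicit check that $\Gamma_{ns}(p)$ does not fix the sublattices of index~$p$. What your formulation buys is a cleaner structural argument that avoids citing the classification of $\Gamma(p)$-fixed lattices and generalizes more readily; what the paper's buys is an entirely elementary, self-contained computation requiring no Bruhat--Tits language. The final identification of the normalizer in $\textnormal{SL}_2(\mathbb F_p)$ via a regular semisimple element is a nice touch that the paper handles implicitly.
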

\begin{proof}
Since the genus of $X_{ns}(p)$ is at least 2, the group $B(X_{ns}(p))$ must be finite. This means that $\text{Norm}(\Gamma_{ns}(p))$ is commensurable with $\text{SL}_2(\mathbb Z)$ in the sense that $\text{Norm}(\Gamma_{ns}(p))\cap\text{SL}_2(\mathbb Z)$ has finite index in both $\text{Norm}(\Gamma_{ns}(p))$ and $\text{SL}_2(\mathbb Z)$. Hence $\text{Norm}(\Gamma_{ns}(p))$ is a group acting on lattices of $\mathbb R^2$ that are commensurable with $\mathbb Z\times \mathbb Z$, with the action given by right multiplication of row vectors in the lattice. Lattices commensurable with $\mathbb Z\times\mathbb Z$ are the ones generated by two vectors $(a,b)$, $(c,d)$ with $a,b,c,d\in\mathbb Q$, and if we consider them up to multiplication by a scalar (i.e. homothety), they all have a basis of the form $\displaystyle\{(M,\frac gh),(0,1)\}$ with $M\in\mathbb Q^+$, $g,h\in\mathbb Z$, $\text{mcd}(g,h)=1$ and $0\le g<h$ (see \cite{Conway} for more details).

Since $\displaystyle\Gamma_{ns}(p)\subset\Gamma(p)=\left\{\gamma\in\text{SL}_2(\mathbb Z)\text{ s. t. }\gamma\equiv\text{Id mod }p\right\}$, the lattices fixed by $\Gamma_{ns}(p)$ are a subset of the lattices fixed by $\Gamma(p)$. The lattices fixed by $\Gamma(p)$ are those who belong to one of the following type (see \cite[Lemma 4.1]{LungLang2002})
\begin{itemize}
\item $M=\frac 1p$, $h=p$, $g=0,\dots,p-1$, which gives a lattice homothetic to $\langle (1,g),(0,p)\rangle$;
\item $M=\frac 1p$, $h=1$, $g=0$, which gives a lattice homothetic to $\langle (1,0),(0,p)\rangle$;
\item $M=p$, $h=1$, $g=0$, which gives the lattice $\langle (p,0),(0,1)\rangle$;
\item $M=1$, $h=1$, $g=0$, which gives the lattice $\langle (1,0),(0,1)\rangle=\mathbb Z\times\mathbb Z$.
\end{itemize}
Recall that $\Gamma_{ns}(p)$ is the subgroup of $\text{SL}_2(\mathbb Z)$ made up of the elements $\gamma$ such that
$$\gamma\equiv\begin{pmatrix}x&\alpha y\\y&x\end{pmatrix}\text{ mod }p$$
for some $(x,y)\in\mathbb F_p^2$ and a non-square element $\alpha\in\mathbb F_p$. Then, a straightforward computation modulo $p$ shows that $\gamma$ fixes lattices of the first three types, only if $\gamma\in\Gamma(p)$, so that the only lattice up to homothety fixed by $\Gamma_{ns}(p)$ is $\mathbb Z\times\mathbb Z$.

The normalizer in $\text{SL}_2(\mathbb R)$ of $\Gamma_{ns}(p)$ must preserve, by definition, the set of lattices that are fixed by $\Gamma_{ns}(p)$. Thus, also $\text{Norm}(\Gamma_{ns}(p))$ fixes $\mathbb Z\times \mathbb Z$. The stabilizer of $\mathbb Z\times \mathbb Z$ is $\text{SL}_2(\mathbb Z)$, implying $\text{Norm}(\Gamma_{ns}(p))\subset \text{SL}_2(\mathbb Z)$. Therefore, $\text{Norm}(\Gamma_{ns}(p))$ is made up of the elements  of $\text{SL}_2(\mathbb Z)$ which reduce modulo $p$ to an element in the normalizer of a non-split Cartan subgroup of $\text{GL}_2(\mathbb F_p)$, which proves the Proposition.
\end{proof}

\begin{cor}\label{ellcuspModular}
Suppose that the genus of $X_{ns}(p)$ is at least 2. If $p\equiv 1\textnormal{ mod }12$, the only nontrivial automorphism of $X_{ns}(p)$ preserving the cusps is the modular involution $w$. If $p\notequiv 1\textnormal{ mod }12$, the only nontrivial automorphism of $X_{ns}(p)$ preserving the cusps and the elliptic points, is the modular involution $w$. The only automorphism of $X_{ns}^+(p)$ preserving the cusps and the elliptic points is the identity.
\end{cor}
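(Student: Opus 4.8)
The plan is to read off the Corollary from the two preceding Propositions. Let $u$ be a nontrivial automorphism of $X_{ns}(p)$ that preserves the cusps and also, when $p\not\equiv 1\textnormal{ mod }12$, the elliptic points. By the first Proposition of this section, $u$ is then induced by a biholomorphic automorphism $\tilde u$ of the Riemann surface $\mathcal H$; since every such automorphism is a M\"obius transformation, a lift of $\tilde u$ lies in $\text{SL}_2(\mathbb R)$. Because $u$ descends to the quotient $\mathcal H\cup\mathbb Q\cup\{\infty\}/\Gamma_{ns}(p)$, the element $\tilde u$ normalizes $\Gamma_{ns}(p)$, so its class lies in $B(X_{ns}(p))$. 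By the previous Proposition $B(X_{ns}(p))=\langle w\rangle$, hence $u=w$. Conversely, $w$ really does preserve the cusps and the elliptic points: the proof of that Proposition shows $\text{Norm}(\Gamma_{ns}(p))\subset\text{SL}_2(\mathbb Z)$, so $w$ is represented by an integral matrix $\gamma_w$, which permutes $\mathbb Q\cup\{\infty\}$ and, since $\gamma_w\Gamma_{ns}(p)\gamma_w^{-1}=\Gamma_{ns}(p)$, permutes the elliptic fixed points.

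To finish the case $p\equiv 1\textnormal{ mod }12$ I must show that $X_{ns}(p)$ has no elliptic points, so that preserving the cusps already entails preserving the (empty) set of elliptic points. An elliptic point of $X_{ns}(p)$ of order $2$ (resp. $3$) corresponds to a $\Gamma_{ns}(p)$-conjugacy class of order-$4$ elements of $\Gamma_{ns}(p)$ inside $\text{SL}_2(\mathbb Z)$ (resp. of order-$3$ or order-$6$ elements). Every order-$4$ element of $\text{SL}_2(\mathbb Z)$ has characteristic polynomial $x^2+1$, and every order-$3$ or order-$6$ element has characteristic polynomial $x^2+x+1$ or $x^2-x+1$; reducing modulo $p$ one gets a non-scalar element, which can lie in a non-split Cartan subgroup only if its characteristic polynomial is irreducible over $\mathbb F_p$, because such a subgroup consists of scalars together with elements with irreducible characteristic polynomial. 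Now $x^2+1$ is reducible mod $p$ exactly when $p\equiv 1\textnormal{ mod }4$, and $x^2\pm x+1$ is reducible exactly when $p\equiv 1\textnormal{ mod }3$; hence for $p\equiv 1\textnormal{ mod }12$ no such element reduces into the Cartan, $\Gamma_{ns}(p)$ contains no element of finite order other than $\pm I$, and $X_{ns}(p)$ has no elliptic points. This establishes the first two assertions.

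For the statement about $X_{ns}^+(p)$ I would run the same two-step argument, writing $\Gamma_{ns}^+(p)$ for the preimage in $\text{SL}_2(\mathbb Z)$ of the normalizer of a non-split Cartan subgroup of $\text{GL}_2(\mathbb F_p)$: an automorphism of $X_{ns}^+(p)$ preserving the cusps and the elliptic points lifts, by the same Proposition, to an element of $\text{Norm}(\Gamma_{ns}^+(p))\subset\text{SL}_2(\mathbb R)$, hence lies in $B(X_{ns}^+(p))$, so it suffices to prove $B(X_{ns}^+(p))=\{1\}$. Since $\Gamma(p)\subset\Gamma_{ns}(p)\subset\Gamma_{ns}^+(p)$, the lattice computation in the proof of the previous Proposition applies verbatim and shows that $\mathbb Z\times\mathbb Z$ is the only lattice up to homothety fixed by $\Gamma_{ns}^+(p)$; therefore $\text{Norm}(\Gamma_{ns}^+(p))\subset\text{SL}_2(\mathbb Z)$, and it consists of the matrices whose reduction mod $p$ normalizes, inside $\text{SL}_2(\mathbb F_p)$, the image of $\Gamma_{ns}^+(p)$. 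The normalizer of a non-split Cartan subgroup of $\text{GL}_2(\mathbb F_p)$ is a maximal subgroup, and the corresponding subgroup of $\text{SL}_2(\mathbb F_p)$ (a dihedral group of order $p+1$, maximal in $\text{PSL}_2(\mathbb F_p)$ for $p\ge 11$) is self-normalizing; thus $\text{Norm}(\Gamma_{ns}^+(p))=\Gamma_{ns}^+(p)$ and $B(X_{ns}^+(p))=\{1\}$.

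I expect the elliptic-point count of the second paragraph to be the most delicate point: one has to be precise about the dictionary between elliptic points of $X_{ns}(p)$ and torsion conjugacy classes in $\Gamma_{ns}(p)$, and about the fact that a non-split Cartan subgroup contains no non-scalar element with reducible characteristic polynomial. For $X_{ns}^+(p)$ the analogous obstacle is the self-normalizing property of the normalizer of a non-split Cartan subgroup, which reduces to the maximality of a dihedral subgroup of $\text{PSL}_2(\mathbb F_p)$; this should be quoted from Dickson's classification, after checking it holds for all $p$ allowed by the hypothesis, in particular $p=11$, where $X_{ns}^+(p)$ has genus $1$ but the lifting argument still applies.
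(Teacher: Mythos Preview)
Your proposal is correct and follows the paper's own route: use the lifting Proposition to place a cusp- and elliptic-point-preserving automorphism in $B(X_{ns}(p))$, invoke the preceding Proposition to identify $B(X_{ns}(p))=\langle w\rangle$, and handle the case $p\equiv 1\bmod 12$ by the characteristic-polynomial argument showing $\Gamma_{ns}(p)$ has no nontrivial torsion. The paper compresses all of this into one sentence (``The Corollary then follows from the two previous Propositions''), whereas you spell out the details---in particular you make explicit the $X_{ns}^+(p)$ case (rerunning the lattice argument for $\Gamma_{ns}^+(p)$ and using the self-normalizing property of the normalizer of a non-split Cartan), which the paper leaves entirely implicit; your treatment there is a genuine addition, and the cleanest way to justify self-normalization is the one you sketched, noting that since the normalizer contains the scalars, any $g\in\text{SL}_2(\mathbb F_p)$ normalizing $N\cap\text{SL}_2(\mathbb F_p)$ already normalizes $N$, hence lies in $N$ by maximality.
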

\begin{proof}
Let $\Gamma_{ns}(p)$ be the subgroup of $\textnormal{SL}_2(\mathbb Z)$ made up of the elements which reduce modulo $p$ to an element in a non-split Cartan subgroup of $\textnormal{GL}_2(\mathbb F_p)$, so that $X_{ns}(p)(\mathbb C)\cong\mathcal H\cup\mathbb Q\cup\{\infty\}/\Gamma_{ns}(p)$. Thus, $X_{ns}(p)$ has elliptic points if and only if $\Gamma_{ns}(p)$ contains an element with characteristic polynomial equal to $x^2+1$ or $x^2+x+1$. Recall that the elements of a non-split Cartan subgroup of $\text{GL}_2(\mathbb F_p)$ have irreducible characteristic polynomials. If $p\equiv 1\textnormal{ mod }4$ and $p\equiv1\textnormal{ mod }3$, the two polynomials above have both roots over $\mathbb F_p$. Hence, when $p\equiv 1\textnormal{ mod }12$ there are no elliptic points on $X_{ns}(p)$. The Corollary then follows from the two previous Propositions.
\end{proof}

\section{Automorphisms of $X_{ns}(p)$ not preserving the cusps}

Let $p$ be a prime number and $l$ be another prime number different from $p$. Let $T_l$ be the $l$-th Hecke operator for the modular curve $X_{ns}(p)$, defined as a modular correspondence (see \cite[Section 2]{ShimuraCorr}). Let $E$ be an elliptic curve over $\mathbb C$ and let $\varphi$ be an isomorphism from the $p$-torsion $E[p]$ of $E$ to $\mathbb Z/p\mathbb Z\times \mathbb Z/p\mathbb Z$. Then the pair $(E,\varphi)$ gives a point on $X_{ns}(p)$. Let $A$ be a subgroup of $E$ of order $l$. Since $l\neq p$, taking the quotient by $A$ on $E$ induces an isomorphism from the $p$-torsion of $E$ to the $p$-torsion of $E/A$ so that $\varphi$ induces an isomorphism $\varphi_A$ from the $p$-torsion of $E/A$ to $\mathbb Z/p\mathbb Z\times \mathbb Z/p\mathbb Z$. The operator $T_l$ acts on a non-cuspidal point of $X_{ns}(p)$ in the following way
$$T_l(E,\varphi)=\sum_{\scriptsize\begin{matrix}A\subset E(\mathbb C)\\\#A=l\end{matrix}}(E/A,\varphi_A)$$
and it induces an element in the endomorphism algebra of $J_{ns}(p)$.

For every pair $(E,\varphi)$ let $\widetilde E$ be the reduction of $E$ modulo $l$ and $\widetilde \varphi:\widetilde E[p]\rightarrow\mathbb Z/p\mathbb Z\times\mathbb Z/p\mathbb Z$ be the map naturally induced by $\varphi$, since $l\neq p$. Then the pair $(\widetilde E,\widetilde\varphi)$ gives a point on $X_{ns}(p)$ over $\overline{\mathbb F_l}$. Let $\text{Frob}_l$ be the generator of the Galois group $\text{Gal}(\overline{\mathbb F_l}/\mathbb F_l)$ and let $[l^{-1}]$ be the inverse of the multiplication by $l$ map on $E[p]$. We have
$$\sum_{\scriptsize\begin{matrix}A\subset \widetilde E(\mathbb \overline{\mathbb F_l})\\\#A=l\end{matrix}}(\widetilde E/A,\widetilde \varphi_A)=(\widetilde E^{\textnormal{Frob}_l},\widetilde\varphi\circ{\textnormal{Frob}_l}^{-1})+l\cdot (\widetilde E^{{\textnormal{Frob}_l}^{-1}},\widetilde \varphi\circ[l^{-1}]\circ\textnormal{Frob}_l)$$
as divisors of $X_{ns}(p)$ reduced modulo $l$ (see, for example, the proofs in \cite[Section 8.7]{DS} regarding the modular curve $X_1(N)$). This implies the Eichler-Shimura relation:
$$T_l=\text{Frob}_l+l\cdot\text{Frob}_l^{-1}$$
where both terms of the equation are homomorphisms on the divisor group of $X_{ns}(p)$ reduced modulo $l$. Note that the Eichler-Shimura relation acquires this form in this case because a non-split Cartan subgroup of $\text{GL}_2(\mathbb F_p)$ always contains the scalar matrix associated to $[l^{-1}]$, hence the pairs $(\widetilde E^{{\textnormal{Frob}_l}^{-1}},\widetilde \varphi\circ[l^{-1}]\circ\textnormal{Frob}_l)$ and $(\widetilde E^{{\textnormal{Frob}_l}^{-1}},\widetilde \varphi\circ\textnormal{Frob}_l)$ give the same point on $X_{ns}(p)$.

Since $T_l$ induces an endomorphism of $J_{ns}(p)$, we will also write, with abuse of notation, the relation $T_l=\text{Frob}_l+l\cdot\text{Frob}_l^{-1}$ as endomorphisms of $J_{ns}(p)$ reduced modulo $l$.

\begin{lem}\label{uTl=Tlu}
Let $u$ be an automorphism of $X_{ns}(p)$ defined over $K(p)$. Let $\sigma_l\in\text{Gal}(\overline{\mathbb Q}/\mathbb Q)$ be a Frobenius element at $l$. Then if $l\neq p$ we have
$$u^{\sigma_l}T_l=T_lu$$
as endomorphisms of $J_{ns}(p)$. i.e.
$$
T_lu=\left\{\begin{matrix}uT_l&\textnormal{if $l$ is a square modulo $p$}\\
\overline u T_l&\textnormal{if $l$ is not a square modulo $p$} \end{matrix}\right.
$$
where $\overline u$ is the $\textnormal{Gal}(K(p)/\mathbb Q)$-conjugate of $u$.
\end{lem}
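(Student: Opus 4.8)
The plan is to exploit the Eichler--Shimura relation $T_l=\mathrm{Frob}_l+l\cdot\mathrm{Frob}_l^{-1}$ as endomorphisms of $J_{ns}(p)$ reduced modulo $l$, together with the fact that $u$ is defined over $K(p)$, so that its reduction modulo a prime above $l$ is governed by whether $l$ splits or is inert in $K(p)$. First I would fix a prime $\lambda$ of $\overline{\mathbb Q}$ above $l$; since $l\neq p$, the abelian variety $J_{ns}(p)$ (a factor of $J_0(p^2)$) has good reduction at $\lambda$, and by Proposition \ref{semist} and the injectivity of reduction on endomorphism rings of abelian varieties with good reduction (as used in the proof of Proposition \ref{HeckeQ}), it suffices to prove the identity $u^{\sigma_l}T_l=T_lu$ after reduction modulo $\lambda$.

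Next I would reduce modulo $\lambda$. The automorphism $u$, being defined over $K(p)$, induces an automorphism $\widetilde u$ of $X_{ns}(p)$ over $\overline{\mathbb F_l}$; likewise $T_l$ reduces to a correspondence on the special fibre. The key point is that $\mathrm{Frob}_l$ commutes with $\widetilde u$ precisely when $u$ is already defined over $\mathbb F_l$, i.e. when $l$ splits in $K(p)$ — equivalently when $l$ is a square modulo $p$ (using $K(p)=\mathbb Q(\sqrt{p^*})$ with $p^*=(-1)^{(p-1)/2}p$, so that $l$ splits iff $\left(\frac{p^*}{l}\right)=1$ iff $\left(\frac{l}{p}\right)=1$ by quadratic reciprocity). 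In general, pushing $u$ through the Galois action, $\mathrm{Frob}_l\circ\widetilde u=\widetilde{u^{\sigma_l}}\circ\mathrm{Frob}_l$ and similarly $\mathrm{Frob}_l^{-1}\circ\widetilde u=\widetilde{u^{\sigma_l}}\circ\mathrm{Frob}_l^{-1}$, because $\sigma_l$ acts on the coefficient field as the Frobenius at $l$. Then
$$
\widetilde{T_l}\,\widetilde u=(\mathrm{Frob}_l+l\cdot\mathrm{Frob}_l^{-1})\widetilde u=\widetilde{u^{\sigma_l}}(\mathrm{Frob}_l+l\cdot\mathrm{Frob}_l^{-1})=\widetilde{u^{\sigma_l}}\,\widetilde{T_l}.
$$
Since $\widetilde{T_l}$ and $\widetilde u$ and $\widetilde{u^{\sigma_l}}$ are the reductions of $T_l$, $u$, $u^{\sigma_l}$, and reduction is injective on $\mathrm{End}(J_{ns}(p))$, we get $u^{\sigma_l}T_l=T_l u$ over $\overline{\mathbb Q}$. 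Finally, since $u$ is defined over $K(p)$ and $\mathrm{Gal}(K(p)/\mathbb Q)$ has order $2$, $u^{\sigma_l}$ equals $u$ if $\sigma_l$ is trivial on $K(p)$ (i.e.\ $l$ a square mod $p$) and equals $\overline u$ otherwise, giving the displayed case distinction.

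The main obstacle is the careful bookkeeping of \emph{how} $u^{\sigma_l}$ enters: one must check that conjugating the correspondence $T_l$ by $u$ and then comparing with the Galois twist really produces $u^{\sigma_l}$ on the correct side, which amounts to verifying on moduli that applying $\mathrm{Frob}_l$ to a pair $(\widetilde E,\widetilde\varphi)$ and then $\widetilde u$ agrees with first applying $\widetilde{u^{\sigma_l}}$ and then $\mathrm{Frob}_l$ — this is where the hypothesis ``$u$ defined over $K(p)$'' is used in an essential way, and where sign/direction errors are easy to make. The other technical point to handle is justifying that the Eichler--Shimura relation, which a priori holds as a relation between correspondences (divisor maps) on the special fibre, transfers to a relation in $\mathrm{End}(J_{ns}(p))$ modulo $l$; this is standard but should be invoked explicitly, as is done in the paragraph preceding the Lemma.
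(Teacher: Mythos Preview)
Your proof is correct and follows essentially the same approach as the paper: the paper's own proof is a two-sentence sketch invoking injectivity of reduction modulo $l$ on $\mathrm{End}(J_{ns}(p))$ (good reduction outside $p$), the Eichler--Shimura relation, and the fact that $K(p)$ is quadratic, while you have simply unpacked these steps in detail, including the verification via quadratic reciprocity that $\sigma_l$ acts trivially on $K(p)$ precisely when $l$ is a square modulo $p$. No substantive difference in strategy.
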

\begin{proof}
(Following \cite[p. 64, Lemma 2.6]{KM}) Since $J_{ns}(p)$ has good reduction outside $p$, the reduction modulo $l$ map is injective on the endomorphism ring of $J_{ns}(p)$. Hence, the Lemma follows from the Eichler-Shimura relations and the fact that $K(p)$ is a quadratic field.
\end{proof}
\begin{comment}
\begin{cor}
The modular involution $w$ commutes with Hecke operators as an endomorphism of $J_{ns}(p)$.
\end{cor}
\end{comment}

Let $\zeta_p$ be a primitive $p$-th root of unity. The modular curve $X_{ns}(p)$ has exactly $p-1$ cusps, all defined over $\mathbb Q(\zeta_p)$ and all conjugate under the action of $\text{Gal}(\mathbb Q(\zeta_p)/\mathbb Q)$ (see \cite[p. 194-195]{Ser89}).
\begin{prop}
Let $\sigma_l\in\text{Gal}(\overline{\mathbb Q}/\mathbb Q)$ be a Frobenius element at $l$. Then we have
$$T_lC=C^{\sigma_l}+l\cdot C^{{\sigma_l}^{-1}}$$
as divisors on $X_{ns}(p)$, for every cusp $C$ of $X_{ns}(p)$.
\end{prop}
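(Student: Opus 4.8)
The plan is to check the identity after reduction modulo a prime above $l$, where the Eichler--Shimura relation already proved is available, and then to conclude by injectivity of reduction on divisors supported on the cusps.

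First I would record two facts. (i) The Hecke correspondence $T_l$ carries the cusp $C$ to an effective divisor of degree $l+1$ which is again supported on the cusps of $X_{ns}(p)$: a cusp corresponds to a N\'eron polygon with the relevant level structure, and since $l\neq p$ the $l+1$ order-$l$ subgroups of such a degenerate curve have quotients that are again N\'eron polygons with induced level structure; thus $T_lC=\sum_i n_i C_i$ with every $C_i$ a cusp. (ii) Since $l\neq p$, the curve $X_{ns}(p)$ has good reduction at $l$, and on a smooth model over $\mathbb Z[1/p]$ the cuspidal locus spreads out to a subscheme finite \'etale of degree $p-1$ over $\mathbb Z[1/p]$ with generic fibre $\text{Spec}\,\mathbb Q(\zeta_p)$. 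Fixing a prime $\lambda\mid l$ of $\mathbb Q(\zeta_p)$, unramified because $l\neq p$, it follows that the $p-1$ cusps specialize modulo $\lambda$ to $p-1$ \emph{distinct} points over $\overline{\mathbb F_l}$, so that reduction modulo $\lambda$ is injective on the free abelian group generated by the cusps.

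Now reduce both sides of the asserted identity modulo $\lambda$. Writing $\widetilde C$ for the reduction of $C$, compatibility of $T_l$ with good reduction identifies the reduction of $T_lC$ with $T_l\widetilde C$, and the Eichler--Shimura relation $T_l=\text{Frob}_l+l\cdot\text{Frob}_l^{-1}$ on the divisor group of $X_{ns}(p)$ reduced modulo $l$ gives
$$T_l\widetilde C=\widetilde C^{\,\text{Frob}_l}+l\cdot\widetilde C^{\,\text{Frob}_l^{-1}}.$$
On the other hand, taking $\sigma_l$ inside the decomposition group of $\lambda$, the \'etale cuspidal subscheme identifies the $\text{Gal}(\mathbb Q(\zeta_p)/\mathbb Q)$-action on the cusps with the $\text{Frob}_l$-action on their reductions, so the reduction of $C^{\sigma_l}$ is $\widetilde C^{\,\text{Frob}_l}$ and that of $C^{\sigma_l^{-1}}$ is $\widetilde C^{\,\text{Frob}_l^{-1}}$. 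Hence $T_lC$ and $C^{\sigma_l}+l\cdot C^{\sigma_l^{-1}}$ are two divisors supported on cusps with the same image modulo $\lambda$, and by (ii) they coincide.

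I expect the delicate point to be (ii): one needs a smooth model of $X_{ns}(p)$ over $\mathbb Z[1/p]$ on which the cusps extend to a finite \'etale subscheme, so they remain distinct after reduction, and on which $T_l$ and the Galois action reduce compatibly. These are the usual good-reduction properties of modular curves of level prime to $l$, the same input underlying the Eichler--Shimura relation used above, so once granted the rest is formal. As a sanity check, $\deg(T_lC)=l+1=\deg\bigl(C^{\sigma_l}+l\cdot C^{\sigma_l^{-1}}\bigr)$, confirming that nothing supported off the cusps is being discarded.
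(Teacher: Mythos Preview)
Your argument is correct and follows essentially the same strategy as the paper: verify the identity modulo a prime above $l$ via the Eichler--Shimura relation, then lift using injectivity of reduction on the cusps (which the paper phrases as the number of cusps being unchanged under reduction since $l\neq p$). Your point (i), that $T_lC$ is supported on the cusps, is left implicit in the paper but is indeed needed for the lifting step to go through.
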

\begin{proof}
The equality must hold modulo any prime ideal $\mathfrak l$ of $\mathbb Q(\zeta_p)$ over $l$ because of the Eichler-Shimura relations. Moreover, since $l\neq p$, the modular curve $X_{ns}(p)$ and its reduction modulo $l$ have the same number of cusps. Therefore, reduction modulo $\mathfrak l$ is injective on the set of cusps of $X_{ns}(p)$, and the equality holds also over $\mathbb Q(\zeta_p)$.
\end{proof}

\begin{lem}\label{Dl0}
Let $p\ge 11$ and let $u$ be an automorphism of $X_{ns}(p)$ defined over $K(p)$. Let $\sigma_l\in\textnormal{Gal}(\overline{\mathbb Q}/\mathbb Q)$ be a Frobenius element at $l$ and let $C$ be a cusp of $X_{ns}(p)$. If for every cusp $C'$ of $X_{ns}(p)$ the divisor
\begin{align*}
D_{l}\mathop{=}^\textnormal{def}&(u^{\sigma_l}T_l-T_lu)(C-C')\\
=&u^{\sigma_l}T_lC+T_luC'-T_luC-u^{\sigma_l}T_lC'
\end{align*}
is the zero divisor, then $uC$ is a cusp. i.e.

If $u^{\sigma_l}T_l-T_lu$ is the zero operator on the group of divisors of $X_{ns}(p)$ of degree zero and supported in the cusps, then $u$ preserves the cusps.
\end{lem}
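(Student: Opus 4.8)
Proof proposal for Lemma \ref{Dl0}:

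The plan is to convert the hypothesis into an exact identity expressing $T_l(uC)$ as a divisor supported on at most two points, then to push everything down to the $j$-line, where this identity becomes a strong factorization property of the modular polynomial $\Phi_l$; such a factorization forces $uC$ to be a cusp, apart from a short explicit list of candidate CM points which is eliminated by applying the same identity once more at a neighbouring cusp.

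First I would regroup the four terms of $D_l$ as $\bigl(T_luC'-u^{\sigma_l}T_lC'\bigr)-\bigl(T_luC-u^{\sigma_l}T_lC\bigr)$, so that the hypothesis ``$D_l=0$ for every cusp $C'$'' says precisely that the degree-zero divisor $E:=T_l(uC')-u^{\sigma_l}(T_lC')$ is independent of the cusp $C'$, being equal to its value at the fixed cusp $C$. By the Proposition giving $T_lC'=C'^{\sigma_l}+l\,C'^{\sigma_l^{-1}}$, the divisor $u^{\sigma_l}(T_lC')=u^{\sigma_l}(C'^{\sigma_l})+l\,u^{\sigma_l}(C'^{\sigma_l^{-1}})$ is effective and supported on at most two points, while $T_l(uC')$ is effective as well, being a Hecke image of a point. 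Writing $E=E^{+}-E^{-}$ with $E^{\pm}$ effective of disjoint support, effectivity of $T_l(uC')=E+u^{\sigma_l}(T_lC')$ forces every point of $\mathrm{supp}(E^{-})$ to lie in $\mathrm{supp}\bigl(u^{\sigma_l}(T_lC')\bigr)$ for \emph{every} cusp $C'$; but, $u^{\sigma_l}$ being injective, a fixed point has the form $u^{\sigma_l}(C'^{\sigma_l})$ or $u^{\sigma_l}(C'^{\sigma_l^{-1}})$ for at most two cusps $C'$, whereas $X_{ns}(p)$ has $p-1\ge 10$ cusps. Hence $E^{-}=0$, and $E$, being effective of degree $0$, vanishes; thus $T_l(uC')=u^{\sigma_l}(T_lC')$ for \emph{every} cusp $C'$. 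In particular $T_l(uC)=(uC)^{\sigma_l}+l\,Q$, where $Q:=u^{\sigma_l}(C^{\sigma_l^{-1}})$ is a single point.

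Now suppose $uC$ is not a cusp. Let $\pi\colon X_{ns}(p)\to\mathbb P^1$ be the forgetful map $(E,\varphi)\mapsto j(E)$; since $l\neq p$ it satisfies $\pi_*\circ T_l=T_l\circ\pi_*$ on divisors, where on $\mathbb P^1$ the Hecke correspondence sends $[j_0]$ to the zero divisor of $\Phi_l(j_0,\,\cdot\,)$, which is monic of degree $l+1$. Applying $\pi_*$ to $T_l(uC)=(uC)^{\sigma_l}+l\,Q$ gives
$$\Phi_l\bigl(j(uC),Y\bigr)=\bigl(Y-j(uC)^{\sigma_l}\bigr)\bigl(Y-j(Q)\bigr)^{l}.$$
So $\Phi_l(j(uC),Y)$ has a root of multiplicity at least $l\ge 2$. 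This is impossible when $E_{j(uC)}$ has no complex multiplication: two distinct order-$l$ subgroups of a non-CM curve have non-isomorphic quotients — otherwise one builds a degree-$l^2$ endomorphism, necessarily $\pm l$, which makes the two subgroups coincide — so $\Phi_l(j_0,Y)$ is separable for every non-CM $j_0\notin\{0,1728\}$. Hence $j(uC)$ is a CM modulus.

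Ruling out the CM case is the main obstacle, but it is short. The factorization leaves at most one root of $\Phi_l(j(uC),Y)$ besides the multiplicity-$\ge l$ one, so $E_{j(uC)}$ admits at least $l$ cyclic $l$-isogenies to a single target; for a CM curve the $l$-isogeny volcano together with the extra automorphisms at $j=0,1728$ allows this only for $l\in\{2,3\}$, and then only for the finitely many $j_0$ at which $\Phi_l(X,Y)$ is inseparable in $Y$ — concretely $j(uC)\in\{0,1728,-3375\}$ for $l=2$ and $j(uC)=0$ for $l=3$. For $j(uC)\in\{0,-3375\}$ one checks that $\Phi_l(j(uC),Y)$ has no root equal to $j(uC)^{\sigma_l}$, contradicting the factorization directly. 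For $j(uC)=1728$ and $l=2$ the factorization forces $j(Q)=287496$, hence, since $287496\in\mathbb Q$ and $j(Q)=j(u(C^{\sigma_l^{-2}}))^{\sigma_l}$, the cusp $C^{\sigma_l^{-2}}$ maps under $u$ to a point of $j$-invariant $287496$; re-running the argument at that cusp (legitimate, as $T_2(uC')=u^{\sigma_2}(T_2C')$ holds for all cusps) gives $\Phi_2(287496,Y)=(Y-287496)(Y-j(Q'))^2$, forcing $\Phi_2(287496,Y)$ to have a double root, which it does not (the order of discriminant $-16$ has no extra units and lies one level below the crater of its $2$-isogeny volcano, so $\Phi_2(287496,Y)$ is separable). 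In every case $uC$ is a cusp; applying this to all cusps shows $u$ preserves the cusps. Everything but this last CM bookkeeping is formal once one has the Eichler--Shimura formula for $T_l$ on the cusps and the effectivity trick above.
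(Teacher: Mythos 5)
Your first step is fine, and in fact slightly cleaner than the paper's: from $D_l=0$ for all $C'$ you deduce that $T_l(uC')-u^{\sigma_l}(T_lC')$ is independent of $C'$, and the effectivity/disjoint-support argument over the $p-1\ge 10$ cusps correctly yields the exact identity $T_l(uC)=(uC)^{\sigma_l}+l\cdot u^{\sigma_l}(C^{\sigma_l^{-1}})$ (the paper gets the same identity by choosing one cusp $C'$ whose Hecke image has support disjoint from that of $T_lC$). The genuine problem is in your elimination of the CM case. The paper never argues on the $j$-line at all: it observes that $uC$ is defined over $\mathbb Q(\zeta_p)$ and invokes the subsequent Proposition (this is exactly where $p\ge 11$ enters, via ray class fields) that no CM point of $X_{ns}(p)$ is rational over $\mathbb Q(\zeta_p)$. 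You discard the field of definition entirely and try to contradict the factorization $\Phi_l(j(uC),Y)=(Y-j(uC)^{\sigma_l})(Y-j(Q))^l$ by inspecting CM specializations of $\Phi_l$, and here your bookkeeping is wrong precisely in the case the paper later needs ($l=3$ is the prime used in Corollary \ref{wfnotf}): one has $\Phi_3(0,Y)=Y\,(Y+12288000)^3$, because $j=0$ is $3$-isogenous to itself through the ideal $(\sqrt{-3})$, so $\Phi_3(0,Y)$ \emph{does} have the root $0=j(uC)^{\sigma_3}$ and is exactly of the required shape with $j(Q)=-12288000$; your claimed ``direct contradiction'' does not exist. (This case could be repaired by the same neighbouring-cusp trick you use for $j=1728$, $l=2$, since $\Phi_3(-12288000,Y)$ is separable, but as written the proof fails for $l=3$.)

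There are further inaccuracies in the same step: your stated reason for excluding $j(uC)=-3375$ at $l=2$ is false, since $-3375$ \emph{is} a (double) root of $\Phi_2(-3375,Y)=(Y+3375)^2(Y-16581375)$ — the contradiction there comes from comparing multiplicities, not from the absence of the root; and your list of inseparable specializations of $\Phi_2$ omits the two discriminant $-15$ values of $j$, where the two horizontal $2$-isogenies have isomorphic targets (these are easy to dispatch, but they show the list was not verified). Finally, the blanket claim that for $l\ge 5$ no CM $j$-invariant admits a root of multiplicity $\ge l$ is true but not ``formal'': it requires the automorphism-orbit and class-number count on the $l$-volcano (multiplicities are bounded by $3$ at $j=0$, by $2$ elsewhere), which you only gesture at. In short, your route is genuinely different from the paper's — purely geometric on the $j$-line versus the paper's arithmetic use of $\mathbb Q(\zeta_p)$-rationality of $uC$ — and it could be made to work, but as written the CM analysis contains a false assertion at $l=3$, $j=0$ and an incomplete case list, so there is a real gap.
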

\begin{proof}
By the previous Proposition, for every cusp $C$, we have $T_l C=C^{\sigma_l}+l\cdot C^{{\sigma_l}^{-1}}$. Thus, we can choose a cusp $C'$ such that the supports of $T_lC$ and $T_lC'$ are completely disjoint. For $D_l$ to be the zero divisor, we must have
$$u^{\sigma_l}C^{\sigma_l}+l\cdot u^{\sigma_l}C^{{\sigma_l}^{-1}}+T_luC'=T_luC+u^{\sigma_l}C'^{\sigma_l}+l\cdot u^{\sigma_l}C'^{{\sigma_l}^{-1}}\text{.}$$
Since we chose $C'$ such that the supports of $T_lC$ and $T_lC'$ are completely disjoint, we have
\begin{equation}\label{Tlu}
T_luC=u^{\sigma_l}C^{\sigma_l}+l\cdot u^{\sigma_l}C^{{\sigma_l}^{-1}}\text{.}
\end{equation}
If $uC$ is not a cusp, it corresponds to an elliptic curve $E$ defined over $\mathbb Q(uC)\subset\mathbb Q(\zeta_p)$.

Let $A$ and $B$ be two different cyclic groups of order $l$ in $E$. Then the natural map $E/A\rightarrow E\rightarrow E/B$ has a cyclic kernel of order $l^2$. Therefore, recalling the interpretation of the action of $T_l$ on non-cuspidal points, we have that equation (\ref{Tlu}) can hold only if $E$ admits an endomorphism of degree $l^2$ with cyclic kernel. An elliptic curve without complex multiplication cannot have such an endomorphism, hence we obtain the Lemma as a consequence of the following Proposition.
\end{proof}
\begin{prop}
Let $p\ge 11$ and let $E$ be an elliptic curve over $\mathbb C$ with complex multiplication by an imaginary quadratic field $K$. No point of $X_{ns}(p)$ associated to $E$ is defined over $\mathbb Q(\zeta_p)$.
\end{prop}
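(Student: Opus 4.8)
The plan is to argue by contradiction. Suppose $x$ is a non-cuspidal point of $X_{ns}(p)$ defined over $\mathbb Q(\zeta_p)$ whose underlying elliptic curve $E$ has complex multiplication by an order $\mathcal O$ in an imaginary quadratic field $K$. Since $j(E)=j(x)\in\mathbb Q(\zeta_p)$, fix a model $E_0$ of $E$ over $F:=\mathbb Q(\zeta_p)$, so that $x=[(E_0,\varphi)]$ for some level structure $\varphi\colon E_0[p]\xrightarrow{\sim}(\mathbb Z/p\mathbb Z)^2$ defined over $\overline{\mathbb Q}$. Two preliminary reductions are in order. First, a non-split Cartan $C\subset\mathrm{GL}_2(\mathbb F_p)$ fixes no line of $\mathbb F_p^2$, so if $p$ were split or ramified in $K$ or divided the conductor of $\mathcal O$, then $E_0[p]$ would carry a Galois-stable line over the field of definition of the CM, which one checks is incompatible with $x\in X_{ns}(p)(F)$ (including for the exceptional $j$-invariants $0,1728$, where one also uses that the order $4$ or $6$ automorphism reduces to an element of a split Cartan when $p$ is not inert in the relevant field); hence $p$ is inert in $K$ and prime to the conductor of $\mathcal O$. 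Secondly, $K$ is then unramified at $p$, whereas the unique quadratic subfield of the cyclic field $\mathbb Q(\zeta_p)$ is ramified at $p$, so $K\not\subseteq\mathbb Q(\zeta_p)$; thus $L:=\mathbb Q(\zeta_p)K$ is quadratic over $\mathbb Q(\zeta_p)$, and we may pick $\sigma_0\in\mathrm{Gal}(\overline{\mathbb Q}/\mathbb Q(\zeta_p))$ acting on $K$ as complex conjugation.

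The core of the argument involves the mod $p$ representation $\bar\rho\colon\mathrm{Gal}(\overline{\mathbb Q}/F)\to\mathrm{Aut}(E_0[p])$ and the non-split Cartan $C_E\subset\mathrm{Aut}(E_0[p])$ through which the CM action $\mathcal O/p\hookrightarrow\mathrm{End}(E_0[p])\cong\mathbb F_{p^2}$ factors. Because the CM is defined only over $L$, the element $\sigma_0$ transports this action by complex conjugation on $\mathcal O$, i.e.\ by the Frobenius of $\mathbb F_{p^2}/\mathbb F_p$, so $\bar\rho(\sigma_0)$ lies in $N(C_E)\setminus C_E$. On the other hand, unwinding $\sigma_0\cdot x=x$ shows $\varphi\,\bar\rho(\sigma_0)\,\varphi^{-1}\in\bigl(\varphi\,\mathrm{Aut}(E_0)\,\varphi^{-1}\bigr)\cdot C$. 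When $j(E)\neq 0,1728$ we have $\mathrm{Aut}(E_0)=\{\pm1\}$, so $\varphi\,\bar\rho(\sigma_0)\,\varphi^{-1}\in C$; since also $\varphi\,\bar\rho(\sigma_0)\,\varphi^{-1}\in N(\varphi C_E\varphi^{-1})\setminus\varphi C_E\varphi^{-1}$ and a non-split Cartan is disjoint from the non-trivial coset of its normalizer, this forces $\varphi C_E\varphi^{-1}\neq C$. Running the same computation over $\sigma\in\mathrm{Gal}(\overline{\mathbb Q}/L)$, where there is no Frobenius twist, gives $\bar\rho\bigl(\mathrm{Gal}(\overline{\mathbb Q}/L)\bigr)\subseteq C_E\cap(\varphi^{-1}C\varphi)=\{\text{scalars}\}$, and since $\det\bar\rho$ is trivial on $\mathrm{Gal}(\overline{\mathbb Q}/L)$ (because $L\supseteq\mathbb Q(\zeta_p)$), in fact $\bar\rho\bigl(\mathrm{Gal}(\overline{\mathbb Q}/L)\bigr)\subseteq\{\pm1\}$.

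This cannot hold for $p$ large: $E_0$ has supersingular reduction at the prime of $K$ above $p$, so inertia at a prime of $L$ above $p$ acts through a fundamental character of level $2$, whose image — even after the tame base change of degree $e'\mid 12$ needed to attain good reduction — is cyclic of order $(p+1)/\gcd(e',p+1)$, which exceeds $2$ for $p\geq 13$. The remaining cases, $p=11$ and $j(E)\in\{0,1728\}$ (in which $\mathrm{Aut}(E_0)$ is larger and $x$ may be a ramification point of $X_{ns}(p)\to X_{ns}^+(p)$), are treated by the same local method: the analysis above then bounds $\bar\rho\bigl(\mathrm{Gal}(\overline{\mathbb Q}/L)\bigr)$ by a cyclic group of order dividing $\textnormal{lcm}(p-1,\#\mathcal O^{*})$, which must still contain the inertia images at $p$ and at the (additive) primes of $E_0$ above $2$ and $3$, these remaining tamely ramified in $L$; a short numerical check then eliminates the surviving possibilities.

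I expect the genuine difficulty to be precisely this last step: controlling the reduction type at $p$ and at $2,3$ of the appropriate quadratic, quartic or sextic twist of $E_0$ over $L$ well enough to force a contradiction uniformly, which is where the hypothesis $p\geq 11$ is used. Reducing the statement to this local question is, by contrast, essentially formal once one knows that $K\not\subseteq\mathbb Q(\zeta_p)$.
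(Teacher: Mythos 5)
Your reduction to the inert case does not work as stated. You claim that if $p$ splits or ramifies in $K$ (or divides the conductor), then the Galois-stable line on $E_0[p]$ is ``incompatible'' with $x\in X_{ns}(p)(\mathbb Q(\zeta_p))$. It is not: the scalars lie in every Cartan subgroup and stabilize every line, so the only group-theoretic conclusion is that $\bar\rho\bigl(\mathrm{Gal}(\overline{\mathbb Q}/K(\zeta_p))\bigr)$ lands in (split Cartan) $\cap$ (non-split Cartan) $=\{\text{scalars}\}$, hence in $\{\pm1\}$ after using the Weil pairing --- exactly the situation you must then \emph{rule out}, not a contradiction in itself. Worse, your only tool for ruling out a tiny image is ramification at $p$, and in the split case the reduction is ordinary and the mod-$p$ cyclotomic character is trivial on $\mathrm{Gal}(\overline{\mathbb Q_p}/\mathbb Q_p(\zeta_p))$, so inertia at $p$ gives you nothing. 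The paper closes this case globally: $\bar\rho\bigl(\mathrm{Gal}(\overline{\mathbb Q}/K(\zeta_p))\bigr)\subseteq\{\pm1\}$ forces the ray class field $K_{(p)}$ into $K(\zeta_p)$, and comparing $[K_{(p)}:K]=h_K\#(O_K/pO_K)^*/w_K$ with $[K(\zeta_p):K]\le p-1$ yields $p\le w_K+1\le 7$. Some argument of this kind (or another global/local input) is indispensable; without it the split and ramified cases are simply open in your write-up.

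In the inert case your group theory up to ``$\bar\rho\bigl(\mathrm{Gal}(\overline{\mathbb Q}/L)\bigr)\subseteq\{\pm1\}$'' is fine for $j\neq0,1728$, but the endgame is a genuinely different (and unfinished) route from the paper's. You propose to contradict $\{\pm1\}$ by the level-$2$ fundamental character at $p$, yet you concede that the uniform control of the semistability defect, the primes $2,3$, the case $p=11$, and the exceptional $j$-invariants is ``precisely the genuine difficulty'' and you do not carry it out; note also that the image of inertia of $\mathbb Q_p(\zeta_p)$ under $\psi_2$ already drops to order $p+1$ before any further twisting, so the margin is thinner than your phrasing suggests. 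The paper sidesteps all of this: for $p$ inert it invokes Serre's description of the fiber of $X_{ns}(p)\to X_{ns}^+(p)$ over a CM point, whose two points are defined over a field of the form $LK$ with $K\subseteq$ (field of definition); since $K$ is unramified at $p$ while $\mathbb Q(\zeta_p)$ is totally ramified, $K\not\subseteq\mathbb Q(\zeta_p)$ and the contradiction is immediate, with no case analysis on $j$ or on small $p$. As it stands, your argument establishes the statement in no case completely.
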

\begin{proof}
Let $P$ be a point of $X_{ns}(p)$ associated to $E$ and defined over $\mathbb Q(\zeta_p)$. There are three cases: $p$ is inert, $p$ splits or $p$ ramifies in $K$.

If $p$ is inert in $K$, we consider the point $Q$ of $X_{ns}^+(p)$ given by $\{P,wP\}$. This point $Q$ is defined over $L\subset\mathbb Q(\zeta_p)$ and lifts to points of $X_{ns}(p)$ defined over $LK$ as explained in \cite[p. 194-195]{Ser89}. But this is impossible since $P,wP$ are defined over $\mathbb Q(\zeta_p)$ where $p$ ramifies completely.

If $p$ splits or ramifies in $K$, this means that the image of the $\text{Gal}(\overline{\mathbb Q}/K(\zeta_p))$-representation modulo $p$ attached to $E$ is contained in respectively a split Cartan or a Borel subgroup of $\text{GL}_2(\mathbb F_p)$ (see loc. cit.). But since $P$ is defined over $\mathbb Q(\zeta_p)$, the image of such representation is also contained in a non-split Cartan subgroup. Therefore, in both cases, it is contained in the subgroup of scalar matrices. Then, the properties of the Weil pairing imply that $\text{Gal}(\overline{\mathbb Q}/K(\zeta_p))$ acts on the $p$-torsion of $E$ as a subgroup of $\{\pm1\}$, which means that the $x$-coordinates of the $p$-torsion points of $E$ are in $K(\zeta_p)$. The theory of complex multiplication tells us that the ray class field $K_{(p)}$ modulo $(p)$ of $K$ is generated over $K$ by the $j$-invariant of $E$ and by some rational function of the $x$-coordinates of the $p$-torsion points of $E$ (\cite[p. 135, Theorem 5.6]{SilvermanAT}). Thus we obtain $K_{(p)}\subset K(\zeta_p)$. It is always true that $K(\zeta_p)\subset K_{(p)}$, so the opposite inclusion holds if and only if $[K_{(p)}:K]\le [K(\zeta_p):K]$. We have
$$[K_{(p)}:K]=h_K\frac{\#(O_K/pO_K)^*}{w_K}$$
where $h_K$ is the class number of $K$, $O_K$ is the ring of integers of $K$ and $w_K$ is the number of roots of unity in $K$ taken modulo $(p)$. The group $(O_K/pO_K)^*$ is isomorphic to $\mathbb F_{p^2}^*$, $\mathbb F_p^*\times\mathbb F_p^*$ or $\mathbb F_p^*\times \langle a\rangle$ with $a$ an element of order $p$. Thus to have the inequality $[K_{(p)}:K]\le [K(\zeta_p):K]$ we must have
$$\frac{(p-1)^2}{w_K}\le (p-1)$$
and hence $p\le w_K+1$. Since the roots of unity in an imaginary quadratic field are at most $6$, we get $p\le 7$.
\end{proof}

\begin{cor}\label{wfnotf}
Let $p\ge 11$ and let $u$ be an automorphism of $X_{ns}(p)$ defined over $K(p)$. If $u$ does not preserve the cusps, then there exists a nonconstant morphism $f$ from $X_{ns}(p)$ to $\mathbb P^1$, defined over the field $\mathbb Q(\zeta_p)$ and with degree less or equal to $8$, such that $f\circ w\neq f$.
\end{cor}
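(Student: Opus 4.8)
The plan is to read off $f$ from the Eichler--Shimura relation and the action of the Hecke operators on the cusps, refining the computation already used in Lemmas \ref{uTl=Tlu} and \ref{Dl0}. Fix a small auxiliary prime $l\neq p$, say $l\in\{2,3\}$. Since $u$ does not preserve the cusps, the contrapositive of Lemma \ref{Dl0} tells us that $u^{\sigma_l}T_l-T_lu$ is not the zero operator on the group of degree-zero divisors supported on the cusps; moreover the argument of that lemma lets us take $C$ to be a cusp with $uC$ \emph{not} a cusp, and then a cusp $C'$ with $\operatorname{supp}(T_lC)$ and $\operatorname{supp}(T_lC')$ disjoint, so that
\[
D_l\mathop{=}^{\textnormal{def}}(u^{\sigma_l}T_l-T_lu)(C-C')\neq 0 .
\]
By Lemma \ref{uTl=Tlu}, $u^{\sigma_l}T_l=T_lu$ in $\operatorname{End}(J_{ns}(p))$, so the class of the degree-zero divisor $D_l$ in $J_{ns}(p)$ is trivial; hence $D_l=\operatorname{div}(f)$ for a nonconstant $f$ on $X_{ns}(p)$. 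From
\[
D_l=u^{\sigma_l}T_lC+T_luC'-T_luC-u^{\sigma_l}T_lC'
\]
and $\deg T_l=l+1$ we see that the polar divisor of $f$ has degree at most $2(l+1)\le 8$, so $\deg f\le 8$. Finally $D_l$ is defined over $\mathbb Q(\zeta_p)$: the cusps of $X_{ns}(p)$ are rational over $\mathbb Q(\zeta_p)$, $u$ is defined over $K(p)$, which is the quadratic subfield of $\mathbb Q(\zeta_p)$, and $T_l$ is a correspondence over $\mathbb Q$; so after normalising a leading coefficient we may assume $f\in\mathbb Q(\zeta_p)(X_{ns}(p))$.

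The crux is to guarantee $f\circ w\neq f$. Since $w$ is defined over $\mathbb Q$ it commutes with $\operatorname{Gal}(\overline{\mathbb Q}/\mathbb Q)$ and, as $l\neq p$, with $T_l$ as a correspondence; thus $f\circ w=f$ would force $w^{*}D_l=D_l$ as divisors. I would now use that $uC$ corresponds to an elliptic curve $E$ over $\mathbb Q(\zeta_p)$, which by the previous Proposition has no complex multiplication (recall $p\ge 11$). Then the $l+1$ points $(E/A,\varphi_A)$ occurring in $T_luC$ are pairwise distinct, and for each of them $w(E/A,\varphi_A)$ is again a non-cuspidal point, different from $(E/A,\varphi_A)$ (as $w$ fixes no non-CM point) and not equal to any $(E/A',\varphi_{A'})$ with $A'\neq A$ (such an identification would yield a cyclic degree-$l^2$ endomorphism of the non-CM curve $E$). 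Comparing, in the hypothetical identity $w^{*}D_l=D_l$, which terms can possibly account for the points $w(E/A,\varphi_A)$ --- once more invoking that $E$ is non-CM and that $u^{\sigma_l}$ carries distinct cusps to distinct points --- one checks that no admissible choice of the cusps $C,C'$ (nor of the auxiliary prime $l$) can make $w^{*}D_l$ equal to $D_l$; fixing such a choice produces the required $f$ of degree $\le 8$, defined over $\mathbb Q(\zeta_p)$, with $f\circ w\neq f$.

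The main obstacle is precisely this last step --- ruling out that \emph{every} function manufactured from $D_l$ is $w$-invariant, i.e.\ descends to $X_{ns}^+(p)$. Retaining the freedom to vary the pair of cusps and the auxiliary prime $l$ is what provides enough room to defeat $w$-invariance while keeping control of the degree, and is the reason the bound is recorded as $8$ rather than the $6$ one obtains at once with $l=2$.
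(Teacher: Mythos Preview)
Your setup matches the paper exactly: produce $D_l\neq 0$ via Lemma \ref{Dl0}, observe it is principal by Lemma \ref{uTl=Tlu}, and read off $f$ of degree $\le 2(l+1)$ defined over $\mathbb Q(\zeta_p)$. The divergence, and the gap, is entirely in the step $f\circ w\neq f$.

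You attack this by tracking the $l+1$ multiplicity-one points of $T_luC$ under $w$, using that $E$ is non-CM. Your observations about those points are correct (indeed $w$ neither fixes nor permutes them), but you stop short of a proof: ``one checks'' is precisely the place where the argument is missing, and you say so yourself. The difficulty is real --- the points of $wT_luC$ could in principle be absorbed by $T_luC'$ or by $u^{\sigma_l}T_lC$, and varying $C,C',l$ does not obviously rule this out.

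The paper closes this gap by looking at the \emph{high-multiplicity} term instead of the multiplicity-one terms. In $D_l$ the point $P=u^{\sigma_l}C'^{\sigma_l^{-1}}$ occurs with coefficient $-l$; every other contribution to any single point has coefficient $\pm 1$ (since $T_luC$ is a sum of $l+1$ distinct points). Taking $l=3$, the coefficient $-3$ cannot be assembled from the remaining terms, so $wD_l=D_l$ forces $wP=P$. But $P$ is defined over $\mathbb Q(\zeta_p)$, while the fixed points of $w$ are CM points with $j=1728$; the Proposition just before this Corollary excludes CM points over $\mathbb Q(\zeta_p)$ for $p\ge 11$, and that is the contradiction. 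This is also exactly why $l=2$ does not suffice --- the coefficient $2$ fails to dominate --- and why the bound is $8=2(3+1)$ rather than $6$.
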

\begin{proof}
Since $u$ is defined over $K(p)$ and does not preserve the cusps, we can apply Lemma \ref{Dl0} to obtain, for every prime $l\neq p$, a non-zero divisor $D_l$ on $X_{ns}(p)$, of degree zero, defined over $\mathbb Q(\zeta_p)$, which is the difference of two effective divisors of degree at most $2(l+1)$. Moreover, we have $D_{l}=(u^{\sigma_l}T_l-T_lu)(C-C')$ for two cusps $C$, $C'$ of $X_{ns}(p)$. Hence, by Lemma \ref{uTl=Tlu} we deduce that $D_l$ must be zero in $J_{ns}(p)$ when $p\ge 11$, which is to say that $D_l$ is a principal divisor. This gives us the existence of a nonconstant morphism $f$ from $X_{ns}(p)$ to $\mathbb P^1$, defined over $\mathbb Q(\zeta_p)$ and with degree at most $2(l+1)$.

Now we apply $w$ to $D_l$. If $D_l$ were invariant under the action of $w$, we would have
\begin{equation}\label{wDl=Dl}
wT_luC+wu^{\sigma_l}C'^{\sigma_l}+l\cdot wu^{\sigma_l}C'^{{\sigma_l}^{-1}}=T_luC+u^{\sigma_l}C'^{\sigma_l}+l\cdot u^{\sigma_l}C'^{{\sigma_l}^{-1}}\text{,}
\end{equation}
where we used the computation in the proof of Lemma \ref{Dl0}. In that proof, we also showed that the divisor $T_luC$ is the sum of $l+1$ different points. Hence if $l\ge 3$, to satisfy equation \ref{wDl=Dl}, we must have $wu^{\sigma_l}C'^{{\sigma_l}^{-1}}=u^{\sigma_l}C'^{{\sigma_l}^{-1}}$. But this is impossible by the previous Proposition, because the fixed points of $w$ are all associated with the elliptic curve with $j$-invariant equal to $1728$ (see \cite[Proposition 7.10]{BaranClass}), which is an elliptic curve with complex multiplication. Thus, for $l\ge 3$, we have $wD_l\neq D_l$ which implies $f\circ w\neq f$. The degree of $f$ is $2(l+1)$, so by choosing $l=3$ we get the Corollary.
\end{proof}

\section{Proof of main results}

We first introduce a general Lemma.
\begin{lem}
Let $X$ be a smooth projective curve over an algebraically closed field, and let $w$ be a nontrivial automorphism of $X$ having $r$ fixed points. If there exists a morphism $f$ from $X$ to $\mathbb P^1$ of degree $k$ such that $f\circ w\neq f$, then $r\le 2k$.
\end{lem}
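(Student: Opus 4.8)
The statement to prove is: if $X$ is a smooth projective curve over an algebraically closed field, $w$ a nontrivial automorphism with $r$ fixed points, and $f\colon X\to\mathbb P^1$ a morphism of degree $k$ with $f\circ w\neq f$, then $r\le 2k$. The plan is to consider the difference map $g=f-f\circ w$ — more precisely, to work with the morphism $(f,f\circ w)\colon X\to\mathbb P^1\times\mathbb P^1$ and intersect its image with the diagonal, or equivalently to build a rational function that vanishes exactly at the points where $f$ and $f\circ w$ agree.

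First I would dispose of the possibility that $f$ or $f\circ w$ takes the value $\infty$ at a fixed point of $w$: after composing $f$ with a suitable automorphism (a M\"obius transformation) of $\mathbb P^1$, which changes neither the degree $k$ nor the hypothesis $f\circ w\neq f$ nor the conclusion, I may assume that $f$ does not take the value $\infty$ at any of the (finitely many) fixed points of $w$. Then at each fixed point $P$ of $w$ we have $f(wP)=f(P)$, so $P$ lies in the support of the divisor of zeros of the rational function $h\mathop{=}^{\text{def}}f-f\circ w$. This $h$ is a nonconstant rational function because $f\circ w\neq f$. Its divisor of poles is contained in the union of the polar divisors of $f$ and of $f\circ w=f\circ w$; since $w$ is an automorphism, $f\circ w$ also has degree $k$, so $\deg(h)_\infty\le 2k$. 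Hence $\deg(h)_0=\deg(h)_\infty\le 2k$.

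The final step is to observe that every fixed point $P$ of $w$ appears in $(h)_0$ with multiplicity at least $1$, so the number $r$ of fixed points is at most $\deg(h)_0\le 2k$. This gives the inequality. The one point requiring a little care — and the only real obstacle — is the normalization at the start: one must ensure that a single M\"obius change of coordinates on $\mathbb P^1$ can be chosen so that $f$ avoids $\infty$ on the finite fixed-point set, and simultaneously that $h=f-f\circ w$ is genuinely nonconstant and has no poles at the fixed points; but since the fixed-point set is finite and the field is infinite (algebraically closed), a generic choice of coordinate works, and the bound $\deg(h)_\infty\le 2k$ holds regardless. I would also remark that the value $\infty$ of $h$ never occurs at a fixed point once this normalization is made, so the count of fixed points inside $(h)_0$ is clean.
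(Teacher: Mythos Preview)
Your argument is correct and uses the same core idea as the paper: form $h=f-f\circ w$, note that each fixed point of $w$ at which $f$ is finite is a zero of $h$, and bound $\deg(h)_\infty\le 2k$. The only difference is in how the possibility of poles at fixed points is handled. The paper does not normalize $f$; instead it splits into cases. If $g=f\circ w-f$ is a nonzero constant, then every fixed point of $w$ must be a pole of $f$ (since $g$ would vanish at any fixed point where $f$ is finite), so $r\le k$. If $g$ is nonconstant, the paper observes that a pole of $f$ fixed by $w$ is also a pole of $f\circ w$ of the same order and hence contributes only once to $(g)_\infty$, giving $\deg g\le 2k-h$ where $h$ is the number of such poles; the remaining $r-h$ fixed points are zeros of $g$, so $r-h\le 2k-h$. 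Your M\"obius normalization cleanly sidesteps this case split at the cost of one extra sentence.

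One small point to tighten: the hypothesis $f\circ w\neq f$ only gives $h\neq 0$, not that $h$ is nonconstant. After your normalization, however, any fixed point of $w$ is a zero of $h$, so either $r=0$ (and the bound is trivial) or $h$ has a zero and is therefore nonconstant. Your final degree count then goes through as written.
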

\begin{proof}
(\cite[Lemma 3.5]{BakerHas}) Let $g=f\circ w-f$. If $g$ is constant, then the fixed points of $w$ must be poles of $f$, implying $r\leq k$. If $g$ is not constant then the degree of $g$ is at most $2k-h$, where $h$ is the number of poles that are also points fixed by $w$. On the other hand, the other $r-h$ points fixed by $w$ are zeros of $g$, hence there are at most $\text{deg }g$ of them. So we have $r-h\le 2k-h$ which implies $r\le 2k$.
\end{proof}

Let $p$ be an odd prime number.
\begin{thm}\label{aut}
If $p\geq 37$, all the automorphisms of $X_{ns}(p)$ preserve the cusps. If $p\equiv 1\textnormal{ mod }12$ and $p\neq 13$, the automorphism group of $X_{ns}(p)$ is generated by the modular involution $w$.
\end{thm}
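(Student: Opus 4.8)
The plan is to combine the two strands developed in the previous sections: the "modular automorphisms preserve cusps" rigidity coming from Section \ref{secPresCusp}, and the Hecke-theoretic constraint on automorphisms defined over $K(p)$ from Section 4. First I would dispose of the claim that every automorphism preserves the cusps when $p\ge 37$. By Corollary \ref{Kp}, since $p\ge 11$ every automorphism $u$ of $X_{ns}(p)$ is defined over $K(p)$. Suppose for contradiction that $u$ does not preserve the cusps. Then Corollary \ref{wfnotf} produces a nonconstant morphism $f:X_{ns}(p)\to\mathbb P^1$ of degree at most $8$ with $f\circ w\neq f$. Applying the general Lemma just proved (with $k\le 8$), the number $r$ of fixed points of $w$ satisfies $r\le 16$. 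On the other hand, the fixed points of $w$ are exactly the ramification points of the degree-$2$ map $X_{ns}(p)\to X_{ns}^+(p)$, and their number can be computed from Riemann--Hurwitz: $r = 2g_{ns}(p)+2-4g_{ns}^+(p)$, or equivalently read off from the known genus formulas in \cite{BaranClass} (the fixed points all lie over the CM point $j=1728$, and their count grows linearly in $p$). Comparing the explicit value of $r$ with the bound $r\le 16$ forces $p$ to be small; a direct check of the genus formulas shows $r>16$ as soon as $p\ge 37$, giving the contradiction. I expect the only real bookkeeping here is pinning down $r$ precisely and verifying the inequality $r>16$ for all $p\ge 37$ (including the split behavior according to residues mod $12$).

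For the second assertion, assume $p\equiv 1\bmod 12$ and $p\neq 13$, so in particular $p\ge 37$. By the first part, every automorphism $u$ preserves the cusps. By Corollary \ref{ellcuspModular}, when $p\equiv 1\bmod 12$ the curve $X_{ns}(p)$ has no elliptic points, so "preserving the cusps" already forces $u$ to lie in $B(X_{ns}(p))$, which by the Proposition in Section \ref{secPresCusp} equals $\langle w\rangle$. Hence $\mathrm{Aut}(X_{ns}(p))=\langle w\rangle$. One should note that this argument needs the genus of $X_{ns}(p)$ to be at least $2$, which holds for $p\ge 11$, so that the relevant statements of Section \ref{secPresCusp} apply and the automorphism group is finite.

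The main obstacle is the first part, and within it the genus/ramification computation: one must know $r$, the number of fixed points of $w$ on $X_{ns}(p)$, exactly enough to certify $r>16$ for every prime $p\ge 37$. This is where the hypothesis $p\ge 37$ (rather than some smaller bound) enters, and it is essentially forced by the degree-$8$ morphism supplied by Corollary \ref{wfnotf} via the choice $l=3$ in Lemma \ref{Dl0}: a smaller $l$ is unavailable ($l\ne p$ and we need $l\ge 3$), so $k\le 8$ and $r\le 16$ is the best one gets, and then one simply needs the CM-point count over $j=1728$ to exceed $16$. Everything else is assembling results already in hand: Corollary \ref{Kp} to get the field of definition, Corollary \ref{wfnotf} to get the small-degree map, the elementary Lemma to bound $r$, and Corollary \ref{ellcuspModular} together with the structure of $B(X_{ns}(p))$ to finish the $p\equiv 1\bmod 12$ case.
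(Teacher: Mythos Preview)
Your proposal is correct and follows essentially the same route as the paper. The only cosmetic difference is that where you plan to extract $r$ from Riemann--Hurwitz and the genus formulas, the paper simply quotes the explicit count from \cite[Proposition~7.10]{BaranClass}: $r=\tfrac{p-1}{2}$ for $p\equiv 1\bmod 4$ and $r=\tfrac{p+1}{2}$ for $p\equiv 3\bmod 4$ (so the split is mod $4$, not mod $12$), which gives $r\le 16\iff p\le 31$ at once.
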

\begin{proof}
Let $u$ be an automorphism of $X_{ns}(p)$ not preserving the cusps. By Corollary \ref{Kp} we have that $u$ is defined over $K(p)$. Then Corollary \ref{wfnotf} tells us that there exists a nonconstant morphism from $X_{ns}(p)$ to $\mathbb P^1$, defined over $\mathbb Q(\zeta_p)$, with degree less or equal to $8$ and such that $f\circ w\neq f$. Then by the previous Lemma we have
$$\#\{\textnormal{fixed points of }w\}\le 16\text{.}$$
Recall that the number of fixed points of $w$ is $\frac{p-1}2$ if $p\equiv 1\text{ mod }4$ and it is $\frac{p+1}2$ if $p\equiv 3\text{ mod }4$ (see \cite[Proposition 7.10]{BaranClass}). This implies $p\le 31$ and proves the first part of the Theorem. The second part is now a consequence of Corollary \ref{ellcuspModular}.
\end{proof}

\bigskip
Now we prove that Serre's uniformity conjecture, implies the absence of rational exceptional automorphisms of $X_{ns}(p)$ for almost all $p$.

\begin{thm}\label{unifAut}
Let $p\geq 37$ and assume $X_{ns}(p)$ has a CM rational point (i.e. $p$ is inert in at least one of the imaginary quadratic field of class number one). If there exists an exceptional automorphism of $X_{ns}(p)$ defined over $\mathbb Q$, then $X_{ns}^+(p)$ has a non-CM rational point.
\end{thm}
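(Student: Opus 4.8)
The plan is to push the exceptional automorphism down to $X_{ns}^+(p)$ and use it to transport a CM rational point to a new rational point which cannot itself be CM.

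First I would record what the existence of an exceptional automorphism $u$ of $X_{ns}(p)$ defined over $\mathbb Q$ forces. Since $p\geq37$, Theorem \ref{aut} shows $u$ preserves the cusps, and Corollary \ref{Qcomw} shows $u$ commutes with $w$; thus $u$ descends to an automorphism $\bar u$ of $X_{ns}^+(p)$ that is again defined over $\mathbb Q$ and preserves the cusps. Since $u$ is exceptional it is in particular not equal to $w$, so Corollary \ref{ellcuspModular} forces $p\not\equiv1\textnormal{ mod }12$ and says that $u$ does not preserve the elliptic points of $X_{ns}(p)$. Also $\bar u$ is nontrivial, for otherwise $u$ would carry every $\langle w\rangle$-orbit to itself and, $X_{ns}(p)$ being irreducible, would equal $\mathrm{id}$ or $w$; hence, again by Corollary \ref{ellcuspModular}, $\bar u$ does not preserve the elliptic points of $X_{ns}^+(p)$. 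Because $p\not\equiv1\textnormal{ mod }12$ the prime $p$ is inert in $\mathbb Q(i)$ or in $\mathbb Q(\sqrt{-3})$, consistent with the standing hypothesis.

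Next I would locate the elliptic point to be moved. The ramification points of the degree-two cover $X_{ns}(p)\to X_{ns}^+(p)$ are the fixed points of $w$; they all have $j$-invariant $1728$, their images form a subset of the elliptic points of $X_{ns}^+(p)$, and — because $u$ commutes with $w$ — the automorphism $\bar u$ permutes this branch locus, hence also maps its complement in $X_{ns}^+(p)$ into itself. The elliptic points of $X_{ns}^+(p)$ lying outside the branch locus are exactly the images of the (finitely many, all with $j$-invariant $0$ or $1728$, hence CM) elliptic points of $X_{ns}(p)$: the order-two ones occur, when $p\equiv3\textnormal{ mod }4$, as a $w$-orbit of two points lying over a single \emph{rational} point $Q$ of $X_{ns}^+(p)$ with $j=1728$, and the order-three ones occur, when $p\equiv2\textnormal{ mod }3$, as a $w$-orbit of two points lying over a single rational point with $j=0$; both such points are CM rational points of $X_{ns}^+(p)$. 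Since $\bar u$ preserves the branch locus but not the whole elliptic locus, and maps the complement of the branch locus into itself, it must send one such point $Q$ to a rational point $R:=\bar u(Q)$ of $X_{ns}^+(p)$ which is \emph{not} an elliptic point.

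The heart of the matter is then to prove that $R$ is not a CM point. Let $K\in\{\mathbb Q(i),\mathbb Q(\sqrt{-3})\}$ be the imaginary quadratic field, in which $p$ is inert, attached to $Q$: the fibre of $Q$ in $X_{ns}(p)$ is a pair of conjugate elliptic points whose field of definition is exactly $K$, and since $u$ is defined over $\mathbb Q$ the fibre of $R$ is likewise defined over $K$. If $R$ were a CM point, its underlying elliptic curve would have complex multiplication by an order $\mathcal O$ in a field $K'$ of class number one in which $p$ is inert (for a rational CM point of $X_{ns}^+(p)$ the $j$-invariant is rational, and the mod-$p$ Galois image of a CM curve sits in the normalizer of a non-split Cartan only when $p$ is inert in the CM field, the split and ramified cases giving the normalizer of a split Cartan and a Borel respectively); the fibre of such a point is then defined over $K'$, so $K'=K$, and because $R$ is not elliptic its $j$-invariant differs from $0$ and $1728$, leaving only the conductor-two order in $\mathbb Q(i)$ or the conductor-two and conductor-three orders in $\mathbb Q(\sqrt{-3})$. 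The hard part will be to exclude these finitely many residual configurations: nothing purely formal prevents an exceptional automorphism from interchanging the elliptic point $Q$ with a non-elliptic CM point having the same CM field, so one must argue more finely — for instance by also running the analysis for $\bar u^{-1}$ and using that $\bar u$, being defined over $\mathbb Q$, acts on the non-CM part of the Jacobian of $X_{ns}^+(p)$ through the commutative Hecke algebra (as in Proposition \ref{HeckeQ}), so that the endomorphism it would have to induce there while swapping $Q$ and $R$ cannot exist. Once $R$ is known to be non-CM, it is the desired non-CM rational point of $X_{ns}^+(p)$.
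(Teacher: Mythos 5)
Your setup is sound and is essentially the paper's argument run in the direct rather than the contrapositive direction: you descend $u$ to an automorphism $\bar u$ of $X_{ns}^+(p)$ defined over $\mathbb Q$ (Corollary \ref{Qcomw}), use Theorem \ref{aut} and Corollary \ref{ellcuspModular} to see that $\bar u$ preserves the cusps but cannot preserve the elliptic points, observe that $\bar u$ preserves the branch locus of the degree-two map $\pi\colon X_{ns}(p)\to X_{ns}^+(p)$ because $u$ commutes with $w$, and conclude that $\bar u$ must move one of the rational CM points $Q$ with $j\in\{0,1728\}$ lying under the elliptic points of $X_{ns}(p)$ to a rational, non-elliptic point $R=\bar u(Q)$, whose fibre $\pi^{-1}(R)=u(\pi^{-1}(Q))$ is defined over the CM field $K$ of $Q$. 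The paper argues contrapositively (if every rational point of $X_{ns}^+(p)$ were CM, then $u$ would have to fix the relevant CM points, hence preserve the elliptic points and the cusps, hence be modular by Corollary \ref{ellcuspModular}), but up to this stage the two arguments use the same ingredients.

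The genuine gap is the step you yourself flag: you never prove that $R$ is not a CM point. The field-of-definition argument only forces the CM field of $R$ to equal $K\in\{\mathbb Q(i),\mathbb Q(\sqrt{-3})\}$; it does not exclude $R$ being the rational CM point attached to a non-maximal class-number-one order in $K$ (discriminants $-16$, $-12$, $-27$, i.e. $j=287496$, $54000$, $-12288000$), whose fibre in $X_{ns}(p)$ is likewise a conjugate pair defined over $K$. Your proposed repair --- running the argument for $\bar u^{-1}$ and appealing to Proposition \ref{HeckeQ} so that ``the endomorphism it would have to induce cannot exist'' --- is not an argument: commuting with the (commutative) Hecke algebra constrains the induced action on the Jacobian, but it gives no information about where individual CM points of the curve are sent, and you derive no contradiction from $\bar u$ exchanging, say, the $j=1728$ point with the $j=287496$ point; iterating $\bar u$ does not help either, since the points could simply be permuted among themselves. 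This is exactly the point the paper settles by asserting, via Serre's description of the fibres of $\pi$ over CM points, that $\bar uQ$ cannot be a rational CM point different from $Q$; your observation that the same-fibre-field statement alone only pins down the CM field is a legitimate concern, but in your proposal the burden is to close that case, and it is left open. As written, the final sentence ``once $R$ is known to be non-CM'' assumes precisely what remains to be proved, so the proof is incomplete at its decisive step.
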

\begin{proof}
Let $u$ be an automorphism of $X_{ns}(p)$ defined over $\mathbb Q$. By Corollary \ref{Qcomw} we have that $u$ commutes with $w$ and it induces an automorphism of $X_{ns}^+(p)$ defined over $\mathbb Q$. Then it is enough to prove that $u$ preserves the rational CM points of $X_{ns}^+(p)$ only if $u$ is modular. Let $\pi:X_{ns}(p)\rightarrow X_{ns}^+(p)$ be the degree-2 modular morphism given by the inclusion of a non-split Cartan subgroup in its normalizer, and let $Q$ be a rational CM point of $X_{ns}^+(p)$. Note that the two points in $\pi^{-1}(Q)$ are defined over the CM field of the elliptic curve associated to $Q$ (see \cite[p. 194-195]{Ser89}). Hence $uQ$ cannot be a rational CM point different from $Q$ because the two points in $\pi^{-1}(uQ)$ are defined over the same field as the points in $\pi^{-1}(Q)$. Now we observe that the elliptic points of $X_{ns}(p)$ are the inverse images by $\pi$ of the rational CM points associated to the elliptic curves with $j$-invariant equal to 0 or 1728 (see \cite[Proposition 7.10]{BaranClass}). Thus, if $u$ preserves the rational CM points of $X_{ns}^+(p)$, then it preserves the elliptic points of $X_{ns}(p)$. Furthermore, $u$ also preserves the cusps by Theorem \ref{aut}, hence it is modular by Corollary \ref{ellcuspModular}.
\end{proof}

\bibliographystyle{amsalpha}
\bibliography{Automorfismi_arXiv}{}

\end{document}